\numberwithin{equation}{section}
\newcommand{\B}{\ensuremath{B^s_{p,q}}}
\newcommand{\Ad}{\ensuremath{\mathbf{A}^s_{p,q}}}
\newcommand{\Bd}{\ensuremath{\mathbf{B}^s_{p,q}}}
\newcommand{\Bdx}[3]{\ensuremath{\mathbf{B}^{#1}_{#2,#3}}}
\definecolor{gray}{rgb}{0.19,0.19,0.19}
\newcommand{\Fd}{\ensuremath{\mathbf{F}^s_{p,q}}}
\newcommand{\F}{\ensuremath{F^s_{p,q}}}
\newcommand{\ud}{\mathrm{d}}
\newcommand{\uD}{\mathrm{D}}
\newcommand{\real}{\ensuremath{\mathbb{R}}}
\newcommand{\rn}{\ensuremath{\real^{n}}}
\newcommand{\nat}{\ensuremath{\mathbb{N}}}
\newcommand{\no}{\ensuremath{\nat_0}}
\newcommand{\non}{\ensuremath\mathbb{N}^n_0}
\newcommand{\zn}{\ensuremath{\mathbb{Z}^n}}
\newcommand{\comp}{\ensuremath{\mathbb{C}}}
\newcommand{\dint}{\mathrm{d}}
\newcommand{\SpRn}{\ensuremath{\mathcal{S}'(\rn)}}
\newcommand{\bit}{\begin{itemize}}
\newcommand{\eit}{\end{itemize}}
\newcommand{\beq}{\begin{equation}}
\newcommand{\eeq}{\end{equation}}
\newcommand{\supp}{\mathrm{supp}\,}
\newcommand{\id}{\mathrm{id}}
\theoremstyle{remark}
\newcommand\R{\mathbb{R}}
\newlength{\fixboxwidth}
\newtheorem{lemma}{Lemma}[section]
\newtheorem{proposition}[lemma]{Proposition}
\newtheorem{theorem}[lemma]{Theorem}
\newtheorem{corollary}[lemma]{Corollary}
\newtheorem{definition}[lemma]{Definition}
\newtheorem{remark}[lemma]{Remark}
\newtheorem{example}[lemma]{Example}
\newcommand{\proofstart}{\mbox{P\,r\,o\,o\,f\, :\quad}}
\newcommand{\proofend}{\nopagebreak\hfill\raisebox{0.3em}{\fbox{}}\\}
\newcommand{\Res}{\mathrm{Re}\,}
\newcommand{\ex}{\mathrm{Ex}\,}
\begin{document}

\title{Homogeneity property of Besov and Triebel-Lizorkin spaces}
\author{Cornelia Schneider and Jan Vyb\'iral}

\date{\today}

\maketitle 

\begin{abstract}
We consider the classical Besov and Triebel-Lizorkin spaces defined via differences
and prove a homogeneity property for functions with bounded support in the frame of these spaces.
As the proof is based on compact embeddings between the studied function spaces 
we present also some results on the entropy numbers of these embeddings.
Moreover, we derive some applications in terms of pointwise multipliers.
\end{abstract}

\vskip.5cm

{\bf 2010 AMS Classification:} 46E35, 47B06.

{\bf Keywords and Phrases:} homogeneity, Besov spaces, Triebel-Lizorkin spaces, differences, pointwise multipliers, entropy numbers.

\footnotetext{The second author acknowledges the financial support provided by the START-award ``Sparse Approximation and
Optimization in High Dimensions'' of the Fonds zur F\"orderung der wissenschaftlichen Forschung (FWF, Austrian Science Foundation).}

\section*{Introduction}

The present note deals with classical Besov spaces $\Bd(\rn)$ and Triebel-Lizorkin spaces $\Fd(\rn)$ defined via differences, 
briefly denoted as B- and F-spaces in the sequel. We study the properties of the dilation operator, which is defined
for every $\lambda>0$ as
$$
T_\lambda:f\to f(\lambda\cdot).
$$
The norms of these operators on Besov and Triebel-Lizorkin spaces were studied already in \cite{Bo83} and
\cite[Sections 2.3.1 and 2.3.2]{ET} with complements given in \cite{Vyb08}, \cite{sch08b}, and \cite{SV09}.

We prove the so-called {\em homogeneity property}, showing that for $s>0$ and $0<p,q\leq\infty$,
\beq\label{eq:1}
\|f(\lambda\cdot)|\Bd(\rn)\|\sim \lambda^{s-\frac np}\|f|\Bd(\rn)\|,
\eeq
for all $0<\lambda\leq 1$ and all 
\[
f\in\Bd(\rn)\quad \text{with}\quad \supp f\subset \{x\in\rn:|x|\leq\lambda\}.
\]
The same property holds true for the spaces $\Fd(\rn)$.  This extends and completes \cite{CLT07}, where corresponding results 
for the spaces $\B(\rn)$, defined via Fourier-analytic tools, were established, which coincide with our spaces $\Bd(\rn)$ 
if \ $s>\max\left(0,n\left(\frac 1p-1\right)\right)$\ .   Concerning the corresponding F-spaces $\F(\rn)$, the same homogeneity 
property had already been established in \cite[Cor.~5.16, p.~66]{T-func}. \\
Our results yield immediate applications in terms of pointwise multipliers. 
Furthermore, we remark that the homogeneity property is closely related with questions concerning refined localization,
non-smooth atoms, local polynomial approximation, and scaling properties. This is out of our scope for the time being. But  we use  this property in the forthcoming paper \cite{SV11}  in connection with non-smooth atomic decompositions in function spaces. 

Our proof of \eqref{eq:1} is based on compactness of embeddings between the function spaces under investigation. Therefore we use
this opportunity to present some closely related results on entropy numbers of such embeddings.

This note is organized as follows. We start with the necessary definitions and the results about entropy numbers in 
Section \ref{S-1}. Then we focus on equivalent quasi-norms for the elements of certain subspaces of $\Bd(\rn)$ and $\Fd(\rn)$, 
respectively, from which the homogeneity property will follow almost immediately in Section \ref{S-2}.  The last section states 
some applications in terms of pointwise multipliers.  

\section{Preliminaries}
\label{S-1}
We use standard notation. Let $\nat$ be the collection of all natural numbers
and let $\no = \nat \cup \{0 \}$. Let $\rn$ 
be Euclidean $n$-space, $n \in \nat$, $\comp$ the complex plane. The set of
multi-indices $\beta=(\beta_1, \dots, \beta_n)$, $\beta_i\in\no$, $i=1, \dots,
n$, is denoted by $\non$, with $|\beta|=\beta_1 + \cdots + \beta_n$, as usual.
We use the symbol \ '$\lesssim$'\  in 
\[
a_k \lesssim b_k \quad \mbox{or} \quad \varphi(x) \lesssim \psi(x)
\]
always to mean that there is a positive number $c_1$ 
such that
\[
a_k \leq c_1\,b_k  \quad \mbox{or} \quad
\varphi(x) \leq c_1\,\psi(x) 
\]
for all admitted values of the discrete variable $k$ or the
continuous variable $x$, where $(a_k)_k$, $(b_k)_k$ are
non-negative sequences and $\varphi$, $\psi$ are non-negative
functions. 
We use the equivalence \ `$\sim$' \ in
\[a_k \sim b_k \quad \mbox{or} \quad \varphi(x) \sim \psi(x)\]
for 
\[
a_k\lesssim b_k\quad \text{and} \quad b_k\lesssim a_k \qquad \text{or}\qquad 
\varphi(x) \lesssim \psi(x)\quad \text{and} \quad \psi(x) \lesssim \varphi(x).
\]
If $a\in\real$, then $a_+ := \max(a,0)$ and $[a]$
denotes the integer part of $a$.\\
Given two (quasi-) Banach spaces $X$ and $Y$, we write $X\hookrightarrow Y$
if $X\subset Y$ and the natural embedding of $X$ in $Y$ is continuous. 
All unimportant positive constants will be denoted by $c$, occasionally with
subscripts. For convenience, let both $ \dint x $ and $ |\cdot| $ stand for the
($n$-dimensional) Lebesgue measure in the sequel. 
$L_p(\rn)$, with $0<p\leq\infty$, stands for the usual quasi-Banach space with respect to the Lebesgue measure, quasi-normed by
\[
\|f|L_p(\rn)\|:=\left(\int_{\rn}|f(x)|^p\ud x\right)^{\frac 1p}
\]
with the appropriate modification if $p=\infty$. Moreover, let $\Omega$ denote a domain in $\rn$. Then  $L_p(\Omega)$ is the collection of all complex-valued Lebesgue measurable functions in $\Omega$ such that 
 \[
\|f|L_p(\Omega)\|:=\left(\int_{\Omega}|f(x)|^p\ud x\right)^{\frac 1p}
\]
(with the usual modification if $p=\infty$) is finite. \\
Furthermore, $B_R$ stands for an open ball with radius $R>0$ around the origin, 
\beq\label{op_ba}
B_R=\{
x\in\rn:\, |x|<R
\}.
\eeq

Let $Q_{j,m}$ with $j\in\no$ and $m\in\zn$ denote a cube in
$\rn$ with sides parallel to the axes of coordinates, centered at
$2^{-j}m$, and with side length $2^{-j+1}$. For a cube $Q$ in $\rn$ and $r>0$,
we denote by  $rQ$  the cube in $\rn$ concentric with $Q$ and with side length
$r$ times the side length of $Q$. 
Furthermore, $\chi_{j,m}$ stands for the characteristic function of $Q_{j,m}$.\\

\subsubsection*{Function spaces defined via differences}

If $f$
is an arbitrary function on $\rn$, $h\in\rn$ and $r\in\nat$, then
\[
(\Delta_h^1 f)(x)=f(x+h)-f(x) \quad \text{and} \quad
(\Delta_h^{r+1} f)(x)=\Delta_h^1(\Delta_h^r f)(x)
\]
are the usual iterated differences\index{difference operator!$\Delta^r_hf$}. 
Given a function $f\in L_p(\rn)$ the \textit{$r$-th modulus of smoothness}\index{modulus of smoothness!$\omega_r(f,t)_p$} is defined
by
\begin{equation}
\omega_r(f,t)_p=\sup_{|h|\leq t} \|\Delta_h^r f\mid L_p(\rn)\|, \quad
t>0, \quad 0< p\leq \infty, \label{modulus}
\end{equation}
and 
\begin{equation}
d^r_{t,p}f(x)=\left(t^{-n}\int_{|h|\leq t}|(\Delta^r_hf)(x)|^p\ud h\right)^{1/p}, \quad
 t>0, \quad 0<p<\infty, \label{ball_means}
\end{equation}
denotes its \textit{ball means}\index{ball means $d^r_{t,p}f$}.

\begin{definition}\label{defi-Bd}\label{defi-Fd}
\begin{enumerate}
\item[{\hfill\upshape\bfseries (i)\hfill}] 
Let $0<p,q\leq \infty$, $s>0$, and $r\in\nat$ such that $r>s$. Then the Besov
space $\Bd(\rn)$\index{Besov spaces!of type $\Bd(\rn)$} contains all $f\in L_p(\rn)$ such that
\beq
\|f|\Bd(\rn)\|_r = \|f|L_p(\rn)\| + \left(\int_0^1 t^{-sq} \omega_r(f,t)_p^q
\ \frac{\dint t}{t}\right)^{1/q}
\label{B-diff}
\eeq
$($with the usual modification if $q=\infty)$ is finite.
\item[{\hfill\upshape\bfseries (ii)\hfill}] 
Let $0<p< \infty$, $0<q\leq\infty$, $s>0$, and $r\in\nat$ such that $r>s$. Then $\Fd(\rn)$\index{Triebel-Lizorkin spaces!of type $\Fd(\rn)$} is the collection of all $f\in L_p(\rn)$ such that
\beq
\|f|\Fd(\rn)\|_r = \|f|L_p(\rn)\| + \left\|\left(\int_0^1 t^{-sq}{d_{t,p}^rf(\cdot)}^q\frac{\ud t}{t}\right)^{1/q} |L_p(\rn)\right\|
\label{F-diff}
\eeq
$($with the usual modification if $q=\infty)$ is finite.
\end{enumerate}
\end{definition}

\begin{remark}{
\label{rem-B-diff}\label{rem-F-diff}
These are the {\em classical Besov} and {\em Triebel-Lizorkin spaces}, in particular, when $1\leq
p,q\leq\infty$ ($p<\infty$ for the F-spaces) and $s>0$. We shall sometimes write $\Ad(\rn)$ when both scales of spaces $\Bd(\rn)$ and $\Fd(\rn)$ are concerned simultaneously. \\
Concerning the spaces $\Bd(\rn)$, the study for all admitted $s$, $p$ and $q$ goes back to \cite{St-O-3}, we
 also refer to \cite[Ch. 5, Def. 4.3]{BS} and \cite[Ch. 2, \S 10]{dVL}. There are as well
many older references in the literature devoted to the cases $p,q\geq
1$. \\
The approach by differences for the spaces $\Fd(\rn)$ has been described in detail in \cite{T-F1} for those spaces which can also be considered as subspaces of $\SpRn$. 
Otherwise one finds in \cite[Section 9.2.2, pp. 386--390]{T-F3} the necessary explanations and references to the relevant literature. \\
Definition~\ref{defi-Bd} is independent of $r$, meaning that different values
of $r>s$ result in norms which are equivalent.  This justifies our omission of $r$ in the sequel. Moreover, the integrals $\int_0^1$ can be replaced by $\int_0^{\infty}$ resulting again in equivalent quasi-norms, cf. \cite[Sect.~2]{dVP}. \\
The spaces are quasi-Banach spaces (Banach spaces if $p,q\geq 1$).
Note that we deal with subspaces of $L_p(\rn)$, in particular, for $s>0$ and $0<q\leq\infty$, we have the embeddings
\begin{displaymath}
 \Ad(\rn)\hookrightarrow L_p(\rn),
\end{displaymath}
where $0<p\leq \infty$ ($p<\infty$ for F-spaces). 
Furthermore, the B-spaces are closely linked with the Triebel-Lizorkin spaces via
\beq\label{B-F-B}
\mathbf{B}^s_{p,\min(p,q)}(\rn)\hookrightarrow \Fd(\rn)\hookrightarrow \mathbf{B}^s_{p,\max(p,q)}(\rn),
\eeq
cf. \cite[Prop.~1.19(i)]{sch08a}. 
The classical scale of Besov spaces contains many well-known function
spaces\index{Hoelder-Zygmund spaces@H\"older-Zygmund spaces ${\mathcal C}^s$}. For example, if $p=q=\infty $, one recovers the H\"older-Zygmund spaces $\ {\mathcal C}^s(\rn)$, i.e., 
\begin{equation}\label{B=Zyg} 
 \Bdx{s}{\infty}{\infty}(\rn) =  {\mathcal C}^s(\rn), \quad s>0.
\end{equation}


\label{HN07}


Recent results by {\sc Hedberg, Netrusov}\label{Bq-emb} 
  \cite{H-N} on atomic decompositions and by {\sc Triebel}
  \cite[Sect.~9.2]{T-F3} on the reproducing formula provide an equivalent characterization of Besov spaces $\Bd(\rn)$  using {\em subatomic decompositions}, which
introduces  
$\Bd(\rn)$ as those $f\in L_p(\rn)$
which can be represented as
\[
f(x) = \sum_{\beta\in\non} \sum_{j=0}^\infty \sum_{m\in\zn} \lambda_{j,m}^\beta
k^{\beta}_{j,m}(x),\quad x\in\rn,
\]
with coefficients $\lambda = \{\lambda^\beta_{j,m} \in \comp: \beta\in\non, j\in\no,
  m\in\zn\} $ belonging to some appropriate sequence space $b^{s,\varrho}_{p,q}$ 
defined as  
        \begin{equation}
                b_{p,q}^{s,\varrho}:=\big\{\lambda: \|\lambda|b^{s,\varrho}_{p,q}\|<\infty\big\}
        \end{equation}
        where
        \begin{equation}\label{defi-Bq}
         \|\lambda|b^{s,\varrho}_{p,q}\|=\sup_{\beta\in\non}2^{\varrho|\beta|} 
         \left(\sum_{j=0}^{\infty}2^{j(s-n/p)q}\left(\sum_{m\in\zn}|\lambda^{\beta}_{j,m}|^p\right)^{q/p}
          \right)^{1/q},   
        \end{equation}
         $s>0$, $0<p,q\leq\infty$ 
        $($with the usual modification if $p=\infty$ and/or $q=\infty)$,  $\varrho\geq 0$, 
  and $ k^{\beta}_{j,m}(x)$ are
  certain standardized building blocks (which are universal).  
This subatomic characterization  will turn out to be quite useful when studying entropy numbers. 


}

\end{remark}

In terms of pointwise multipliers in $\Bd(\rn)$ the following is known. 

\begin{proposition}\label{prop-mult-diffeo}
Let $0<p,q\leq\infty$, $s>0$, $k\in\nat$ with $k>s$, and  
let $h\in C^k(\rn)$. Then 
\[
f\longrightarrow hf
\] 
is a linear and bounded operator from $\Bd(\rn)$ into itself.
\end{proposition}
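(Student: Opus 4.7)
Plan. Fix an integer $r$ with $s<r\le k$ (possible because $k>s$) and use the equivalent quasi-norm $\|\cdot|\Bd(\rn)\|_r$ from Definition~\ref{defi-Bd}. The $L_p$ part is immediate: since $h\in C^k(\rn)\subset L_\infty(\rn)$, one has $\|hf|L_p(\rn)\|\le \|h|L_\infty(\rn)\|\cdot\|f|L_p(\rn)\|$. For the difference seminorm I would start from the Leibniz formula for iterated differences,
$$
\Delta_\xi^r(hf)(x)=\sum_{l=0}^{r}\binom{r}{l}\,(\Delta_\xi^l h)(x+(r-l)\xi)\,(\Delta_\xi^{r-l}f)(x),
$$
combined with the $L_p$ quasi-triangle inequality. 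Since $h\in C^r(\rn)$, iterating the mean value theorem $l$ times yields $\|\Delta_\xi^l h|L_\infty(\rn)\|\lesssim |\xi|^l$ uniformly in $\xi$, with a constant depending only on $\sup_{|\alpha|\le r}\|\partial^\alpha h|L_\infty\|$. Taking the supremum over $|\xi|\le t$ therefore produces
$$
\omega_r(hf,t)_p\lesssim \sum_{l=0}^r t^l\,\omega_{r-l}(f,t)_p.
$$

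Raising to the $q$-th power, inserting the weight $t^{-sq}$ and integrating over $(0,1)$ reduces matters to controlling, for each $0\le l\le r$,
$$
I_l:=\int_0^1 t^{(l-s)q}\,\omega_{r-l}(f,t)_p^q\,\frac{\ud t}{t}.
$$
The term $l=0$ is already bounded by $\|f|\Bd(\rn)\|^q$. For $l>s$ the weight $t^{(l-s)q}$ is positive, so the trivial estimate $\omega_{r-l}(f,t)_p\le 2^{r-l}\|f|L_p(\rn)\|$ makes $I_l$ finite. In the delicate range $1\le l\le s$ the weight no longer helps, and I would invoke the classical Marchaud inequality (available since $r-l<r$)
$$
\omega_{r-l}(f,t)_p\lesssim t^{r-l}\int_t^1 u^{-(r-l)}\,\omega_r(f,u)_p\,\frac{\ud u}{u}+t^{r-l}\,\|f|L_p(\rn)\|,
$$
and then close the estimate by Hardy's inequality applied to the resulting double integral; using $t^{lq}\le 1$ on $(0,1]$, this bounds $I_l$ by a multiple of $\|f|\Bd(\rn)\|^q+\|f|L_p(\rn)\|^q$. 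Summing over $l$ and absorbing the $L_p$ errors into the first step yields $\|hf|\Bd(\rn)\|\lesssim \|f|\Bd(\rn)\|$.

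The main obstacle is precisely the intermediate range $1\le l\le s$: neither the positive power of $t$ nor a crude supremum bound on $\omega_{r-l}$ is enough, and one must trade the ``missing'' smoothness of $h$ encoded in $\Delta_\xi^l h$ against the extra smoothness of $f$ contained in the higher-order modulus $\omega_r$. Marchaud together with (quasi-Banach) Hardy-type inequalities makes this work uniformly throughout the full range $0<p,q\le\infty$, $s>0$ allowed by the statement; the case $p=\infty$, in particular the H\"older-Zygmund spaces of \eqref{B=Zyg}, is covered without change.
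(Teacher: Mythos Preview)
Your argument is essentially correct and is the classical ``direct'' route to pointwise multiplier results: Leibniz for iterated differences, the bound $\|\Delta_\xi^l h\|_\infty\lesssim |\xi|^l$ from $h\in C^r$ (here one must read $C^k(\rn)$ as the space with \emph{bounded} derivatives, otherwise already $\|hf|L_p\|\le\|h\|_\infty\|f\|_p$ fails), and then Marchaud plus Hardy for the terms $1\le l\le s$. One small point: in the borderline case where $s$ is an integer and $l=s$ you land on a ``$\mathbf{B}^0_{p,q}$''-type seminorm, which is not covered by Definition~\ref{defi-Bd}; you should either note that $t^{lq}\le 1$ still reduces this to $\int_0^1 t^{-sq}\omega_r(f,t)_p^q\,\ud t/t$ after Marchaud and Hardy (as you indicate), or simply embed $\Bd\hookrightarrow\mathbf{B}^\varepsilon_{p,q}$ for small $\varepsilon>0$. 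The quasi-Banach versions of Marchaud and Hardy that you invoke for $p<1$ or $q<1$ do exist (discretising to dyadic scales and using the monotonicity of $\omega_r(f,\cdot)_p$ is the cleanest way), so the outline goes through uniformly.

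This is, however, a genuinely different route from the paper's. The paper does not argue via moduli of smoothness at all; it simply records that the result follows from atomic decompositions of $\Bd(\rn)$ and cites \cite[Prop.~2.5]{sch08c}. In that approach one writes $f=\sum\lambda_{j,m}a_{j,m}$ with smooth atoms $a_{j,m}$, observes that $h\cdot a_{j,m}$ is again (a constant multiple of) an admissible atom because $h\in C^k$ with $k>s$ suffices for the required smoothness conditions, and reads off boundedness from the sequence-space norm. The atomic proof is shorter and treats the whole range $0<p,q\le\infty$ uniformly once the decomposition theorem is in hand; your approach is more self-contained (no atomic machinery) but pays for this with the Marchaud/Hardy bookkeeping, especially in the quasi-Banach regime.
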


The proof relies on atomic decompositions of the spaces $\Bd(\rn)$,  cf. \cite[Prop.~2.5]{sch08c}. 
We will generalize this result in Section \ref{S-PoMu} as an application of our homogeneity property. 


\subsubsection*{Function spaces on domains $\Omega$}

Let $\Omega$ be a domain in $\rn$. 
We define spaces $\Ad(\Omega)$ by restriction of the corresponding spaces on $\rn$, i.e. $\Ad(\Omega)$ is the collection of all $f\in L_p(\Omega)$ such that there is a $g\in\Ad(\rn)$ with $g\big|_\Omega=f$. Furthermore,
\[
\|f|\Ad(\Omega)\|=\inf \|g|\Ad(\rn)\|,
\]
where the infimum is taken over all $g\in\Ad(\rn)$ such that the restriction $g\big|_\Omega$ to $\Omega$ coincides in $L_p(\Omega)$ with $f$. \\

In particular, the subatomic characterization for the spaces $\Bd(\rn)$ from Remark \ref{HN07} 
carries over. For further details on this subject we refer to \cite[Sect~2.1]{sch10}. \\

Embeddings results between the spaces $\Bd(\rn)$ hold also for the spaces $\Bd(\Omega)$, since they  are defined by 
restriction of the corresponding spaces on $\rn$. 
Furthermore, these results can be improved, if we assume $\Omega\subset \rn$ to be bounded.

\begin{proposition}\label{Bcal-emb}
Let $0<s_2<s_1<\infty$, $0<p_1, p_2, q_1,q_2\leq\infty$, and $\Omega\subset \rn$ be bounded. If
\beq
\delta_+ = s_1-s_2-d\left(\frac{1}{p_1}-\frac{1}{p_2}\right)_+>0,
\label{delta-5-n}
\eeq 
we have the embedding
\beq\label{ext-bd-dom-emb}
                \mathbf{B}^{s_1}_{p_1,q_1}(\Omega)\hookrightarrow \mathbf{B}^{s_2}_{p_2,q_2}(\Omega).
\eeq

\end{proposition}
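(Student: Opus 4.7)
The plan is to split into two cases according to the sign of $1/p_1-1/p_2$ and in each case reduce to an embedding on $\rn$, transferring the result to $\Omega$ via the restriction definition of $\mathbf{B}^s_{p,q}(\Omega)$. The pointwise multiplier Proposition \ref{prop-mult-diffeo} will be used to exploit the boundedness of $\Omega$ in the more delicate case.

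When $p_1 \le p_2$ the hypothesis $\delta_+>0$ reads $s_1-n/p_1 > s_2-n/p_2$, a strict Sobolev-type condition, and the corresponding global embedding $\Bde(\rn) \hookrightarrow \Bdz(\rn)$ is classical for the difference-defined Besov spaces: for all admissible $p,q>0$ it can be obtained via the subatomic decomposition recalled in Remark \ref{HN07}, or read off from \cite{H-N} and \cite[Sect.\ 9.2]{T-F3}. The embedding on $\Omega$ then follows at once from the definition of $\mathbf{B}^s_{p,q}(\Omega)$ by restriction.

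When $p_1>p_2$ the condition $\delta_+ > 0$ reduces to $s_1 > s_2$. Given $f\in\Bde(\Omega)$ I pick any extension $g\in\Bde(\rn)$, and let $\varphi\in C^k(\rn)$ be a smooth cutoff with $\varphi\equiv 1$ on $\Omega$ and $\supp\varphi\subset K$ for some fixed bounded $K\supset\Omega$. By Proposition \ref{prop-mult-diffeo}, $\varphi g\in\Bde(\rn)$ with norm controlled by that of $g$, and $(\varphi g)|_\Omega=f$. For $|h|\le t\le 1$, $\Delta_h^r(\varphi g)$ is supported in a fixed bounded enlargement $K'$ of $K$, so H\"older's inequality with exponent $p_1/p_2>1$ gives $\|\Delta_h^r(\varphi g)|L_{p_2}(\rn)\|\lesssim |K'|^{1/p_2-1/p_1}\|\Delta_h^r(\varphi g)|L_{p_1}(\rn)\|$, hence $\omega_r(\varphi g,t)_{p_2}\lesssim\omega_r(\varphi g,t)_{p_1}$; an analogous bound holds for the $L_p$-parts. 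Thus $\varphi g\in\Bdx{s_1}{p_2}{q_1}(\rn)$ with controlled norm.

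It remains to establish $\Bdx{s_1}{p_2}{q_1}(\rn)\hookrightarrow\Bdz(\rn)$ under $s_1>s_2$ alone. The key observation is the pointwise bound $\omega_r(h,t)_{p_2}\lesssim t^{s_1}\,\|h|\Bdx{s_1}{p_2}{q_1}(\rn)\|$ for $t\in(0,1]$, obtained by combining the monotonicity of $t\mapsto\omega_r(h,t)_{p_2}$ with the $\Bdx{s_1}{p_2}{q_1}$-condition integrated over $[t,2t]$. Inserting this bound into \eqref{B-diff} with parameters $(s_2,p_2,q_2)$ reduces the smoothness integral to $\int_0^1 t^{(s_1-s_2)q_2-1}\,dt$, which is finite since $s_1>s_2$, and yields the embedding on $\rn$; restricting back to $\Omega$ finishes the proof. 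The main technical obstacle is precisely the support-reduction step in the case $p_1>p_2$: one must first make $\varphi g$ compactly supported before H\"older's inequality becomes available on the moduli, and this is exactly what the multiplier Proposition \ref{prop-mult-diffeo} supplies without cost to the Besov norm.
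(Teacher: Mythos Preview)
Your proof is correct and follows essentially the same strategy as the paper: split into the cases $p_1\le p_2$ and $p_1>p_2$, cite the global embedding on $\rn$ for the former, and in the latter use a smooth cutoff together with Proposition~\ref{prop-mult-diffeo} and H\"older's inequality on the compactly supported differences. The only real difference is the order of the two reductions in the case $p_1>p_2$: the paper first lowers the smoothness via $\Bde(\Omega)\hookrightarrow\Bdx{s_2}{p_1}{q_2}(\Omega)$ (an instance of the case $p_1=p_2$, quoted from \cite{HS08}) and then lowers the integrability by the cutoff/H\"older argument, whereas you first lower the integrability to obtain $\varphi g\in\Bdx{s_1}{p_2}{q_1}(\rn)$ and then lower the smoothness on $\rn$ by the elementary bound $\omega_r(h,t)_{p_2}\lesssim t^{s_1}\|h|\Bdx{s_1}{p_2}{q_1}(\rn)\|$. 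Your route has the mild advantage of being self-contained for the smoothness step (no external citation needed there), at the cost of reproving a special case of what you already invoked for $p_1\le p_2$; the paper's route keeps all the smoothness-lowering inside one reference and isolates the boundedness-of-$\Omega$ contribution as the single embedding $\Bdx{s_2}{p_1}{q_2}(\Omega)\hookrightarrow\Bdz(\Omega)$.
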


\proofstart If $p_1\leq p_2$ the embedding follows from \cite[Th.~1.15]{HS08}, since the spaces on $\Omega$ are defined by restriction of their counterparts on $\rn$. Therefore it remains to show that for 
 $p_1>p_2$ we have the embedding
\beq\label{ext-emb-bd-dom}
\mathbf{B}^{s_2}_{p_1,q_2}(\Omega)\hookrightarrow \mathbf{B}^{s_2}_{p_2,q_2}(\Omega). 
\eeq
Let $\psi\in D(\rn)$ with support in the compact set $\Omega_1$ and
\[
\psi(x)=1 \quad \text{if}\quad x\in\bar{\Omega}\subset \Omega_1. 
\]
Then for $f\in \mathbf{B}^{s_2}_{p_1,q_2}(\Omega)$, there exists $g\in\mathbf{B}^{s_2}_{p_1,q_2}(\rn)$ with 
\[
g\big|_{\Omega}=f \qquad \text{and}\qquad \|f|\mathbf{B}^{s_2}_{p_1,q_2}(\Omega)\|\sim \|g|\mathbf{B}^{s_2}_{p_1,q_2}(\rn)\|.
\]
We calculate
\begin{align}
 \|f|\mathbf{B}^{s_2}_{p_2,q_2}(\Omega)\|
&\leq \|\psi g|\mathbf{B}^{s_2}_{p_2,q_2}(\rn)\|\notag\\
&\leq  \|\psi g|\mathbf{B}^{s_2}_{p_1,q_2}(\rn)\|\notag\\
&\leq c_\psi \|g|\mathbf{B}^{s_2}_{p_1,q_2}(\rn)\|\sim \|f|\mathbf{B}^{s_2}_{p_1,q_2}(\Omega)\|.\label{bd-est-1}
\end{align}
The last inequality in \eqref{bd-est-1} follows from Proposition \ref{prop-mult-diffeo}. In the 2nd step we used \eqref{B-diff}  together with the fact that 
\[
\|\Delta^r_h(\psi g)|L_{p_2}(\rn)\|\leq c_{\Omega_1}\|\Delta^r_h(\psi g)|L_{p_1}(\rn)\|, \qquad p_1>p_2,
\]
which follows from H\"older's inequality since $\supp \psi g \subset \Omega_1$ is compact.
\smallskip 
\proofend

\subsubsection*{Entropy numbers}

In order to prove the homogeneity results later on we have to rely on the compactness of embeddings between B-spaces,  $\Bd(\Omega)$, and  F-spaces,  $\Fd(\Omega)$, respectively. This will be established with the help of entropy numbers. 
We briefly introduce the concept and collect some properties afterwards.\\

Let $X$ and $Y$ be quasi-Banach spaces and $T: X\rightarrow Y$ be a bounded linear operator. If additionally, $T$ is continuous we write $T\in L(X,Y)$. Let \index{unit ball}
\ $
U_X=\{x\in X: \|x|X\|\leq 1\}
$ \ 
denote the unit ball in the quasi-Banach space $X$. An operator $T$ is called compact if for any given $\varepsilon>0$ we can cover the image of the unit ball $U_X$ with finitely many balls in $Y$ of radius $\varepsilon$.

\begin{definition}
 Let $X,Y$ be quasi-Banach spaces and let $T\in L(X,Y)$. Then for all $k\in\nat$, the $k$th dyadic entropy number $e_k(T)$ of $T$ is defined by\index{entropy numbers $e_k$}
\[
e_k(T)=\inf \left\{\varepsilon >0: \quad T(U_X)\subset \bigcup_{j=1}^{2^{k-1}}(y_j+\varepsilon U_Y)\quad \text{for some}\quad y_1,\dots, y_{2^{k-1}}\in Y\right\},
\]
where $U_X$ and $U_Y$ denote the unit balls in $X$ and $Y$, respectively.
\end{definition}

These numbers have various elementary properties which are summarized in the following lemma.

\begin{lemma}\label{lemma-entropy}\index{entropy numbers $e_k$!properties}
 Let $X,Y$ and $Z$ be quasi-Banach spaces, let $S,T\in L(X,Y)$ and $R\in L(Y,Z)$.
\bit
\item[{\hfill\bf (i)\hfill}] \textbf{$($Monotonicity$)$} $\|T\|\geq e_1(T)\geq e_2(T)\geq \dots\geq 0$. Moreover, $\|T\|=e_1(T)$, provided that $Y$ is a Banach space.
\item[{\hfill\bf (ii)\hfill}] \textbf{$($Additivity$)$} If $Y$ is a $p$-Banach space $(0<p\leq 1)$, then for all $j,k\in \nat$
\[
e^p_{j+k-1}(S+T)\leq e_j^p(S)+e_k^p(T).
\]
\item[{\hfill\bf (iii)\hfill}] \textbf{$($Multiplicativity$)$} For all $j,k\in\nat$ 
\[
e_{j+k-1}(RT)\leq e_j(R)e_k(T).
\]
\item[{\hfill\bf (iv)\hfill}] \textbf{$($Compactness$)$} $T$ is compact if, and only if, \[\lim_{k\rightarrow \infty}e_k(T)=0.\]
\eit
\end{lemma}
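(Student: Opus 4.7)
The plan is to derive all four items essentially from the definition of $e_k(T)$, using only elementary manipulations of coverings; no deep tools are needed.

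For part (i), monotonicity is immediate since any covering of $T(U_X)$ by $2^{k-1}$ balls of radius $\varepsilon$ is also a covering by $2^k$ balls of the same radius, so $e_{k+1}(T)\leq e_k(T)$. The bound $e_1(T)\leq \|T\|$ follows by taking the single ball of radius $\|T\|$ centred at $0$, since $T(U_X)\subset \|T\| U_Y$. When $Y$ is a Banach space, I would prove the reverse inequality $\|T\|\leq e_1(T)$ by a symmetrization argument: if $T(U_X)\subset y_1+\varepsilon U_Y$ then, applying the inclusion both to $x$ and to $-x\in U_X$, convexity of $U_Y$ yields $Tx=\tfrac12\bigl((Tx-y_1)-(-Tx-y_1)\bigr)\in \varepsilon U_Y$ for every $x\in U_X$, hence $\|T\|\leq \varepsilon$, and then take the infimum.

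For part (ii), given $\delta>0$, choose a covering of $S(U_X)$ by $2^{j-1}$ balls of radius $e_j(S)+\delta$ with centres $s_1,\dots,s_{2^{j-1}}$ and a covering of $T(U_X)$ by $2^{k-1}$ balls of radius $e_k(T)+\delta$ with centres $t_1,\dots,t_{2^{k-1}}$. Then for every $x\in U_X$ one can find indices $\mu,\nu$ with $\|Sx-s_\mu|Y\|\leq e_j(S)+\delta$ and $\|Tx-t_\nu|Y\|\leq e_k(T)+\delta$, so by the $p$-triangle inequality in $Y$,
\[
\|(S+T)x-(s_\mu+t_\nu)|Y\|^p\leq (e_j(S)+\delta)^p+(e_k(T)+\delta)^p.
\]
The $2^{j-1}\cdot 2^{k-1}=2^{j+k-2}$ centres $s_\mu+t_\nu$ thus give a covering of $(S+T)(U_X)$, whence $e_{j+k-1}^p(S+T)\leq (e_j(S)+\delta)^p+(e_k(T)+\delta)^p$, and $\delta\to 0$ finishes the argument. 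For part (iii), I would use the same product-cover idea: cover $T(U_X)$ by $2^{k-1}$ balls $t_i+(e_k(T)+\delta)U_Y$, and cover $R(U_Y)$ by $2^{j-1}$ balls of radius $e_j(R)+\delta$. Applying $R$, each piece $R\bigl(t_i+(e_k(T)+\delta)U_Y\bigr)=Rt_i+(e_k(T)+\delta)R(U_Y)$ is covered by $2^{j-1}$ balls of radius $(e_j(R)+\delta)(e_k(T)+\delta)$, producing $2^{j+k-2}$ balls covering $RT(U_X)$ and yielding $e_{j+k-1}(RT)\leq e_j(R)e_k(T)$ after $\delta\to 0$.

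Part (iv) is a direct translation of total boundedness: if $e_k(T)\to 0$, the image $T(U_X)$ admits finite $\varepsilon$-nets for every $\varepsilon>0$, hence is relatively compact in $Y$. Conversely, if $T$ is compact, $T(U_X)$ is totally bounded, so for every $\varepsilon>0$ there is a finite $\varepsilon$-net of cardinality $N(\varepsilon)$, and choosing $k$ with $2^{k-1}\geq N(\varepsilon)$ gives $e_k(T)\leq \varepsilon$. The only place where one has to be a bit careful is in (ii), where the quasi-Banach case forces use of the $p$-triangle inequality, and in (i), where the equality $\|T\|=e_1(T)$ genuinely needs the Banach (convex) structure of $Y$; everything else is bookkeeping with $2^{k-1}$-coverings, so I do not anticipate a real obstacle.
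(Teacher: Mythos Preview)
Your argument is correct in all four parts; the covering manipulations you describe are the standard elementary proofs of these properties, and the places you flag as requiring care (the convexity of $U_Y$ for the equality in (i), the $p$-triangle inequality in (ii)) are exactly the right ones.

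The paper, however, does not prove this lemma at all: it states the result and then, in the remark immediately following, simply refers the reader to the general theory in \cite{EE}, \cite{pie87}, \cite{koe86}, with \cite{ET} and \cite{cs90} as further reading. So there is no ``paper's own proof'' to compare against; your proposal supplies a complete self-contained argument where the paper gives only a literature pointer. What your approach buys is independence from those references; what the paper's approach buys is brevity, since these properties are entirely classical and well documented in the cited monographs.
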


\remark{
As for the general theory 
we refer to \cite{EE}, \cite{pie87} and \cite{koe86}. 
Further information on the subject is also covered by the more recent books \cite{ET} and \cite{cs90}. 
}

Some problems about entropy numbers of compact embeddings for function spaces can be transferred to corresponding questions in related sequence spaces. 
Let $n>0$ and $\{M_j\}_{j\in\nat_0}$ be a sequence of natural numbers satisfying
\beq\label{restr-Mj}
M_j\sim 2^{jn}, \qquad j\in\nat_0.
\eeq

Concerning entropy numbers for the respective sequence spaces  $b_{p,q}^{s,\varrho}(M_j)$, which are defined as the sequence spaces $b_{p,q}^{s,\varrho}$ in \eqref{defi-Bq} with the sum over $m\in \zn$  replaced by a sum over $m=1,\ldots, M_j$, the following result was proved in \cite[Prop.~3.4]{sch11a}

\begin{proposition}\label{prop-id-comp}
Let $d>0$, $0<\sigma_1,\sigma_2<\infty$, and $0<q_1,q_2\leq\infty$. Furthermore, let $\varrho_1>\varrho_2\geq 0$, 
\beq\label{sigma-1_2}
0<p_1\leq p_2\leq\infty \qquad \text{and}\qquad \delta=\sigma_1-\sigma_2-n\left(\frac{1}{p_1}-\frac{1}{p_2}\right)>0.
\eeq
Then the identity map
\beq
\id: b^{\sigma_1, \varrho_1}_{p_1,q_1}(M_j)\rightarrow b^{\sigma_2, \varrho_2}_{p_2,q_2}(M_j)
\eeq
is compact, where $M_j$ is restricted by \eqref{restr-Mj}.
\end{proposition}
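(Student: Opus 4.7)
The strategy is to exploit the product structure of the sequence space: the outer $\beta$-direction is measured in $\ell_\infty$ with the exponential weight $2^{\varrho|\beta|}$, while the inner $(j,m)$-direction is measured in a mixed norm that does \emph{not} depend on $\beta$. The gap $\varrho_1>\varrho_2$ will force compactness in $\beta$ via a simple truncation, and the gap $\delta>0$ will give compactness in $(j,m)$ via known results on sequence-space models of Besov embeddings on bounded domains.

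Write $\lambda^\beta=(\lambda^\beta_{j,m})_{j,m}$ and denote the inner norm by
\[
\|\mu\,|\,b^{\sigma}_{p,q}(M_j)\|=\Bigl(\sum_{j=0}^{\infty}2^{j(\sigma-n/p)q}\Bigl(\sum_{m=1}^{M_j}|\mu_{j,m}|^p\Bigr)^{q/p}\Bigr)^{1/q},
\]
so that $\|\lambda\,|\,b^{\sigma,\varrho}_{p,q}(M_j)\|=\sup_{\beta}2^{\varrho|\beta|}\|\lambda^\beta\,|\,b^{\sigma}_{p,q}(M_j)\|$. First I would recall that under the condition $\delta>0$ together with $0<p_1\le p_2\le\infty$ and $M_j\sim 2^{jn}$, the inner identity
\[
\id^{\mathrm{inn}}\colon b^{\sigma_1}_{p_1,q_1}(M_j)\longrightarrow b^{\sigma_2}_{p_2,q_2}(M_j)
\]
is \emph{compact}; this is a classical sequence-space counterpart of compact embeddings for Besov spaces on bounded domains and yields, in particular, a continuous embedding with some finite constant $C_0>0$.

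Given $N\in\nat$, split $\id=P_N+Q_N$ where $(P_N\lambda)^\beta=\lambda^\beta$ if $|\beta|\le N$ and $0$ otherwise, and $Q_N=\id-P_N$. For the tail, the $\ell_\infty$ outer structure gives
\[
\|Q_N\lambda\,|\,b^{\sigma_2,\varrho_2}_{p_2,q_2}(M_j)\|=\sup_{|\beta|>N}2^{\varrho_2|\beta|}\|\lambda^\beta\,|\,b^{\sigma_2}_{p_2,q_2}(M_j)\|\le C_0\sup_{|\beta|>N}2^{(\varrho_2-\varrho_1)|\beta|}\cdot 2^{\varrho_1|\beta|}\|\lambda^\beta\,|\,b^{\sigma_1}_{p_1,q_1}(M_j)\|,
\]
which is dominated by $C_0\,2^{-(\varrho_1-\varrho_2)N}\|\lambda\,|\,b^{\sigma_1,\varrho_1}_{p_1,q_1}(M_j)\|$. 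Thus $\|Q_N\|\lesssim 2^{-(\varrho_1-\varrho_2)N}\to 0$ as $N\to\infty$. On the other hand, $P_N$ acts non-trivially only on the finitely many $\beta$ with $|\beta|\le N$, and on each such block it is a scalar multiple of $\id^{\mathrm{inn}}$; a finite direct sum of compact operators is compact, hence $P_N$ is compact.

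To conclude, compact operators between quasi-Banach spaces form a closed subset in the operator quasi-norm: this is a direct consequence of the additivity of entropy numbers (Lemma~\ref{lemma-entropy}(ii)) and of the characterization of compactness via $e_k\to 0$ (Lemma~\ref{lemma-entropy}(iv)). Since $P_N$ is compact and $\|Q_N\|\to 0$, the identity $\id$ is the quasi-norm limit of compact operators and is therefore itself compact. The only delicate point is the uniformity in $\beta$ of the inner embedding constant $C_0$ used in the tail estimate, but this is free of charge: the inner spaces $b^{\sigma_i}_{p_i,q_i}(M_j)$ are independent of $\beta$, so one and the same constant works for every $\beta$.
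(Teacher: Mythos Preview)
Your argument is correct. The paper itself does not prove this proposition; it merely quotes it from \cite[Prop.~3.4]{sch11a}, so there is no ``paper's own proof'' to compare against beyond that citation. What you have supplied is a clean self-contained argument that the cited reference would also have to carry out in some form: split off the $\beta$-tail using $\varrho_1>\varrho_2$, use the known compactness of the inner identity $b^{\sigma_1}_{p_1,q_1}(M_j)\hookrightarrow b^{\sigma_2}_{p_2,q_2}(M_j)$ under $\delta>0$ and $M_j\sim 2^{jn}$ on each of the finitely many remaining blocks, and then pass to the limit via entropy numbers.

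Two small points worth making explicit. First, the closure-of-compacts step via Lemma~\ref{lemma-entropy}(ii) requires the target space to be a $p$-Banach space; here $b^{\sigma_2,\varrho_2}_{p_2,q_2}(M_j)$ is $\min(1,p_2,q_2)$-normed, so this is harmless, but you might say so. Second, the compactness of $\id^{\mathrm{inn}}$ that you invoke as ``classical'' is precisely the $\varrho$-free case of what the paper records immediately afterwards as Theorem~\ref{entropy-n-seq} (via \cite[Th.~9.2]{T-frac}); so within the paper's own logical flow you could simply appeal to that entropy-number estimate for a single $\beta$-block, which already gives $e_k(\id^{\mathrm{inn}})\to 0$. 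With those two remarks your proof is complete and arguably more transparent than deferring to an external reference.
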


The next theorem provides a sharp result for entropy numbers of the identity operator related to the sequence spaces $b^{s,\varrho}_{p,q}(M_j)$.

\begin{theorem}\label{entropy-n-seq}
 Let $n>0$, $0<s_1,s_2<\infty$, and $0<q_1,q_2\leq\infty$. Furthermore, let $\varrho_1>\varrho_2\geq 0$, 
\beq
0<p_1\leq p_2\leq\infty \qquad \text{and}\qquad \delta=s_1-s_2-n\left(\frac{1}{p_1}-\frac{1}{p_2}\right)>0.
\eeq
For the entropy numbers $e_k$ of the compact operator
\beq
\id: b^{s_1, \varrho_1}_{p_1,q_1}(M_j)\rightarrow b^{s_2, \varrho_2}_{p_2,q_2}(M_j)
\eeq
we have
\[
e_k(\id)\sim k^{-\frac{\delta}{n}+\frac{1}{p_2}-\frac{1}{p_1}}, \qquad k\in\nat.
\]
\end{theorem}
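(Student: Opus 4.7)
Set $\gamma := \varrho_1 - \varrho_2 > 0$ and $\tau := (s_1-s_2)/n > 0$. A direct algebraic manipulation using the definition of $\delta$ gives
\[
-\frac{\delta}{n} + \frac{1}{p_2} - \frac{1}{p_1} = -\tau,
\]
so the assertion reads $e_k(\id) \sim k^{-\tau}$. The whole argument rests on the sharp classical estimate
\[
e_k(\id_{\mathrm{cl}}) \sim k^{-\tau}, \qquad \id_{\mathrm{cl}} : b^{s_1}_{p_1,q_1}(M_j) \longrightarrow b^{s_2}_{p_2,q_2}(M_j),
\]
for the unweighted Besov sequence embedding, which under the present hypotheses $p_1 \leq p_2$, $\delta > 0$, $M_j \sim 2^{jn}$ is the standard Edmunds--Triebel result, cf.\ \cite{ET}. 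The point of the strict inequality $\varrho_1 > \varrho_2$ is that the $\beta$-weight difference $2^{-\gamma|\beta|}$ beats the polynomial growth of the index set $\non$, so the problem collapses onto $\id_{\mathrm{cl}}$.

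\emph{Lower bound.} I restrict $\id$ to the subspace of sequences supported on the single block $\beta = 0$. On this subspace the $\varrho_i$-weights are trivial ($2^{\varrho_i \cdot 0} = 1$), both norms reduce to the classical Besov sequence norms, and the restricted map is literally $\id_{\mathrm{cl}}$. The projection $\lambda \mapsto \lambda^{0}$ has norm one on both $X$ and $Y$ because the source and target norms are suprema in $\beta$, so this restriction does not decrease entropy numbers by more than a constant, giving $e_k(\id) \gtrsim e_k(\id_{\mathrm{cl}}) \sim k^{-\tau}$.

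\emph{Upper bound.} I exploit directly the $\sup$-in-$\beta$ ($\ell_\infty$) structure of both norms. Any $\lambda \in U_X$ satisfies $\|\lambda^\beta | b^{s_1}_{p_1,q_1}\| \leq 2^{-\varrho_1|\beta|}$ for every $\beta$. Pick a cutoff $N := \lceil (\tau / \gamma) \log_2 k \rceil$ and an exponent $0 < \alpha < \gamma / \tau$. For each $\beta$ with $|\beta| \leq N$ assign $k_\beta := \lceil c\, k\, 2^{-\alpha|\beta|} \rceil$ entropy centres, with $c > 0$ small enough that $\sum_{|\beta| \leq N} k_\beta \leq k$; this is feasible since $\sum_\beta 2^{-\alpha|\beta|} < \infty$ while $|\{\beta : |\beta| \leq N\}| \asymp N^n = O((\log k)^n)$. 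The classical sharp estimate, rescaled to $Y$, yields a cover of the $\beta$-th slice of $\id(U_X)$ by $2^{k_\beta - 1}$ balls of radius
\[
2^{-\gamma|\beta|}\, e_{k_\beta}(\id_{\mathrm{cl}}) \lesssim 2^{-\gamma|\beta|}\, k_\beta^{-\tau} \lesssim k^{-\tau}\, 2^{(\alpha\tau - \gamma)|\beta|} \lesssim k^{-\tau},
\]
the maximum being attained at $\beta = 0$. For $|\beta| > N$, keep the single centre $0$; using the classical embedding $b^{s_1}_{p_1,q_1} \hookrightarrow b^{s_2}_{p_2,q_2}$ along each slice, the tail contributes at most $2^{-\gamma|\beta|} \lesssim 2^{-\gamma N} \lesssim k^{-\tau}$ in the $Y$-norm. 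Forming products of per-block centres and invoking the sup-in-$\beta$ nature of $\|\cdot|Y\|$, the combined cover has at most $2^{k-1}$ elements of radius $\lesssim k^{-\tau}$, so $e_k(\id) \lesssim k^{-\tau}$.

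\emph{Main obstacle.} The genuine content is twofold: (i) importing the sharp classical estimate for the unweighted embedding from \cite{ET}, and (ii) the bookkeeping of the budget $\{k_\beta\}$. Precisely at step (ii) the strict inequality $\varrho_1 > \varrho_2$ is essential: it permits the choice $\alpha < \gamma/\tau$, which makes the per-block radii geometrically decreasing in $|\beta|$ and pins the maximum at $\beta = 0$. The only role of the logarithmic cutoff $N \asymp \log k$ is to push the tail $2^{-\gamma N}$ below the main rate $k^{-\tau}$; since $N^n = o(k)$, it is absorbed into the entropy budget without generating any logarithmic loss.
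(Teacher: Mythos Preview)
Your argument is correct. The paper, however, does not give a proof at all: it simply records in a remark that the sequence spaces in question can be written as
\[
b^{s_i,\varrho_i}_{p_i,q_i}(M_j)=\ell_\infty\Bigl[2^{\varrho_i}\ell_{q_i}\bigl(2^{j(s_i-n/p_i)}\ell_{p_i}^{M_j}\bigr)\Bigr]
\]
and defers the entire statement to \cite[Th.~9.2]{T-frac}, which covers precisely such nested $\ell_\infty$-structures. What you have written is essentially a self-contained special case of that theorem: you import the sharp unweighted estimate (the inner two layers) from \cite{ET} and handle the outer $\ell_\infty$-layer by hand, via the $\beta=0$ restriction for the lower bound and the geometric entropy budget $k_\beta\asymp k\,2^{-\alpha|\beta|}$ for the upper bound. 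The factorisation $\id_{\mathrm{cl}}=P_Y\circ\id\circ J_X$ with $\|P_Y\|=\|J_X\|=1$ and the multiplicativity of entropy numbers make the lower bound rigorous; for the upper bound, the sup-in-$\beta$ form of the target norm is exactly what allows you to take the maximum of the per-block radii rather than a sum, and the choice $\alpha<\gamma/\tau$ together with $N\asymp(\tau/\gamma)\log_2 k$ balances the covered and tail parts at the rate $k^{-\tau}$ without logarithmic loss. So your route is more explicit than the paper's citation, but it is the same mechanism that underlies the cited result.
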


\remark{
The proof of Theorem \ref{entropy-n-seq} follows from \cite[Th.~9.2]{T-frac}. Using the notation from this book we have
\[
b^{s_i, \varrho_i}_{p_i,q_i}(M_j)=\ell_\infty\left[2^{\varrho_i}\ell_{q_i}\left(2^{j(s_i-\frac{n}{p_i})}\ell_{p_i}^{M_j}\right)\right], \qquad i=1,2.
\]
}

Recall the embedding assertions for Besov spaces {$\mathbf{B}^s_{p,q}(\Omega)$} from Proposition \ref{Bcal-emb}. 
We will give an upper bound for the corresponding entropy numbers of these embeddings. For our purposes it will be sufficient to assume $\Omega=B_R$.


\begin{theorem}\label{th-entropy-num}
Let 
\[
0< s_2<s_1<\infty, \quad 0<p_1,p_2\leq\infty,\quad  0<q_1, q_2\leq\infty,
\] 
and 
\[
\delta_+=s_1-s_2-n\left(\frac{1}{p_1}-\frac{1}{p_2}\right)_+>0.
\]
Then the embedding 
\beq\label{id-compact}
\id: \mathbf{B}^{s_1}_{p_1,q_1}(\Omega)\rightarrow \mathbf{B}^{s_2}_{p_2,q_2}(\Omega)
\eeq
is compact and for the related entropy numbers we compute
\beq\label{entrop-1}
e_k(\id)\lesssim k^{-\frac{s_1-s_2}{n}}, \qquad k\in\nat.
\eeq
\end{theorem}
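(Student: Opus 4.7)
The plan is to reduce the estimate to the sequence space setting of Theorem \ref{entropy-n-seq} via the subatomic decomposition and then handle the case $p_1>p_2$ by a preliminary Hölder-type embedding. I distinguish two cases according to whether $p_1\leq p_2$ or $p_1>p_2$.

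\textbf{Case 1: $p_1 \leq p_2$.} In this situation $\delta_+ = \delta = s_1-s_2-n(1/p_1-1/p_2) > 0$. By Remark \ref{HN07}, combined with the transfer to domains explained in the text following Proposition \ref{Bcal-emb}, the subatomic decomposition carries over to $\mathbf{B}^{s_i}_{p_i,q_i}(B_R)$: each such space is isomorphic to a complemented subspace of the sequence space $b^{s_i,\varrho_i}_{p_i,q_i}(M_j)$, where $M_j$ is the number of cubes $Q_{j,m}$ meeting $B_R$. Because $B_R$ is bounded, $M_j \sim 2^{jn}$, so condition \eqref{restr-Mj} holds. Choosing any $\varrho_1 > \varrho_2 \geq 0$, the commutative diagram (retraction/coretraction) shows that $e_k(\id)$ for \eqref{id-compact} is bounded from above by $e_k$ of the sequence space identity $b^{s_1,\varrho_1}_{p_1,q_1}(M_j) \to b^{s_2,\varrho_2}_{p_2,q_2}(M_j)$. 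Theorem \ref{entropy-n-seq} gives the bound $k^{-\delta/n + 1/p_2 - 1/p_1}$; a short computation shows
\[
-\frac{\delta}{n} + \frac{1}{p_2}-\frac{1}{p_1} \;=\; -\frac{s_1-s_2}{n} + \Bigl(\frac{1}{p_1}-\frac{1}{p_2}\Bigr) + \frac{1}{p_2}-\frac{1}{p_1} \;=\; -\frac{s_1-s_2}{n},
\]
which yields \eqref{entrop-1}. Compactness of $\id$ then follows from Lemma \ref{lemma-entropy}(iv) (or equivalently from Proposition \ref{prop-id-comp}).

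\textbf{Case 2: $p_1 > p_2$.} Here I factorize the embedding as
\[
\mathbf{B}^{s_1}_{p_1,q_1}(B_R) \;\xrightarrow{\id_1}\; \mathbf{B}^{s_1}_{p_2,q_1}(B_R) \;\xrightarrow{\id_2}\; \mathbf{B}^{s_2}_{p_2,q_2}(B_R).
\]
The first factor $\id_1$ is bounded: this is exactly the embedding \eqref{ext-emb-bd-dom} established inside the proof of Proposition \ref{Bcal-emb} (same smoothness, smaller integrability on a bounded domain, via cutoff and Hölder). For the second factor $\id_2$, the parameters satisfy $p_2=p_2$ so $\delta_+ = s_1-s_2 > 0$, placing us in Case 1; hence $e_k(\id_2) \lesssim k^{-(s_1-s_2)/n}$. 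By the multiplicativity of entropy numbers (Lemma \ref{lemma-entropy}(iii)) applied with $j=1$,
\[
e_k(\id) \;\leq\; e_1(\id_1)\, e_k(\id_2) \;=\; \|\id_1\|\, e_k(\id_2) \;\lesssim\; k^{-(s_1-s_2)/n},
\]
and again Lemma \ref{lemma-entropy}(iv) delivers compactness.

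The main technical obstacle is the first case, namely justifying that the subatomic characterization of $\mathbf{B}^{s}_{p,q}(\rn)$ in Remark \ref{HN07} restricts properly to the bounded domain $B_R$ and produces a retraction onto $b^{s,\varrho}_{p,q}(M_j)$ with $M_j\sim 2^{jn}$; once this retraction is in place, everything else is a routine consequence of Theorem \ref{entropy-n-seq} and the elementary properties of entropy numbers collected in Lemma \ref{lemma-entropy}. The factorization argument then handles the remaining case cheaply, so no sharper sequence space result is needed for $p_1>p_2$.
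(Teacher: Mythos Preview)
Your overall strategy matches the paper's: reduce to the sequence-space estimate of Theorem \ref{entropy-n-seq} via subatomic decompositions when $p_1\le p_2$, and handle $p_1>p_2$ by a H\"older-type factorization on the bounded domain. Your Case~2 factorization $\mathbf{B}^{s_1}_{p_1,q_1}\hookrightarrow\mathbf{B}^{s_1}_{p_2,q_1}\hookrightarrow\mathbf{B}^{s_2}_{p_2,q_2}$ differs only cosmetically from the paper's $\mathbf{B}^{s_1}_{p_1,q_1}\hookrightarrow\mathbf{B}^{s_2}_{p_1,q_2}\hookrightarrow\mathbf{B}^{s_2}_{p_2,q_2}$; either order works.

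There is, however, a genuine subtlety in your Case~1 that you gloss over by invoking a ``retraction/coretraction'' diagram and asserting that $\mathbf{B}^{s_i}_{p_i,q_i}(B_R)$ is isomorphic to a complemented subspace of $b^{s_i,\varrho_i}_{p_i,q_i}(M_j)$. That would require \emph{linear} maps in both directions. But the extension operator $\mathrm{Ex}$ from \cite{dVS93} is nonlinear, and so is the coefficient-selection map $S$ (the subatomic expansion is not unique). The paper does not claim a retraction structure; instead it argues directly at the level of coverings: the nonlinear composite $S\circ\mathrm{Ex}$ still sends the unit ball of $\mathbf{B}^{s_1}_{p_1,q_1}(\Omega)$ into a \emph{bounded} set in $b^{s_1,\varrho_1}_{p_1,q_1}(M_j)$, the compact sequence-space identity then maps this into a set covered by $2^k$ balls of radius $ce_k(\id)$, and only the \emph{backward} maps $T$ and $\mathrm{Re}_\Omega$ need to be linear and bounded for that covering to survive in $\mathbf{B}^{s_2}_{p_2,q_2}(\Omega)$. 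Your final paragraph flags exactly this step as the technical obstacle, but your proposed resolution (a retraction) is not what is actually available; you should replace it with the direct covering argument.
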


\begin{proof}
 \underline{Step 1:} Let $p_2\geq p_1$, $\delta_+=\delta$, and let $f\in\mathbf{B}^{s_1}_{p_1,q_1}(\Omega)$, then by \cite[Th.~6.1]{dVS93} there is a (nonlinear) bounded extension operator
\beq \label{final_trace}
g=\ex f\qquad \text{such that}\qquad \Res_{\Omega} g=g\Big|_{\Omega}=f
\eeq
and 
\[
\|g|\mathbf{B}^{s_1}_{p_1,q_1}(\rn)\|\leq c\|f|\mathbf{B}^{s_1}_{p_1,q_1}(\Omega)\|.
\]
We may assume that $g$ is zero outside a fixed neighbourhood $\Lambda$ of $\Omega$. 
Using the subatomic approach for $\mathbf{B}^{s_1}_{p_1,q_1}(\rn)$, cf. Remark \ref{HN07}, we can find an optimal decomposition of $g$, i.e., 
\beq\label{oad}
g(x)=\sum_{\beta\in\nat_0^n}\sum_{j=0}^\infty \sum_{m\in\zn}\lambda^\beta_{j,m}k^\beta_{j,m}(x), \qquad 
\|g|\mathbf{B}^{s_1}_{p_1,q_1}(\rn)\|\sim \|\lambda|b^{s_1,\varrho_1}_{p_1,q_1}\|
\eeq
with $\varrho_1>0$ large. 

Let $M_j$ for fixed $j\in\nat_0$ be the number of cubes $Q_{j,m}$ such that 
\[
rQ_{j,m}\cap\Omega\neq \emptyset.
\]
Since $\Omega\subset \rn$ is bounded we have
\[
M_j\sim 2^{jn}, \qquad j\in\nat_0.
\]
This coincides with \eqref{restr-Mj}. We introduce the (nonlinear) operator $S$,
\[
S: \mathbf{B}^{s_1}_{p_1,q_1}(\rn)\rightarrow b^{s_1,\varrho_1}_{p_1,q_1}(M_j)
\]
by
\[
Sg=\lambda, \qquad \lambda=\left\{\lambda^\beta_{j,m}: \beta\in\nat_0^n, j\in\nat_0, m\in\zn, rQ_{j,m}\cap\Omega\neq\emptyset \right\},
\]
where $g$ is given by \eqref{oad}. Recall that the expansion is not unique but this does not matter. It follows that $S$ is a bounded map since
 \[
 \|S\|=\sup_{g\neq 0}\frac{\|\lambda|b^{s_1,\varrho_1}_{p_1,q_1}(M_j)\|}{\|g|\mathbf{B}^{s_1}_{p_1,q_1}(\rn)\|}\leq c.
 \]
 Next we construct the linear map $T$,
 \[
 T: b^{s_2,\varrho_2}_{p_2,q_2}(M_j)\rightarrow \mathbf{B}^{s_2}_{p_2,q_2}(\rn),
 \]
 given by
 \[
 T\lambda=\sum_{\beta\in\nat_0^n}\sum_{j=0}^\infty \sum_{m=1}^{M_j}\lambda^{\beta}_{j,m}k^\beta_{j,m}(x).
 \]
  It follows that $T$ is a linear (since the subatomic approach provides an expansion of functions via universal building blocks) and bounded map,
 \[
  \|T\|=\sup_{\lambda\neq 0}\frac{\|T\lambda|\mathbf{B}^{s_2}_{p_2,q_2}(\rn)\|}{\|\lambda|b^{s_2,\varrho_2}_{p_2,q_2}(M_j)\|}\leq c.
  \]
  We complement the three bounded maps $\ex$, $S$, $T$ by the identity operator
  \beq\label{comp-id}
  \id: b^{s_1, \varrho_1}_{p_1,q_1}(M_j)\rightarrow b^{s_2, \varrho_2}_{p_2,q_2}(M_j)\qquad \text{with}\qquad \varrho_1>\varrho_2,
  \eeq 
  which is compact by Proposition \ref{prop-id-comp} and the restriction operator 
  \[
  \Res_\Omega: \mathbf{B}^{s_2}_{p_2,q_2}(\rn)\rightarrow \mathbf{B}^{s_2}_{p_2,q_2}(\Omega),
  \]
  which is continuous.  From the constructions it follows that 
 \beq
 \id\left(\mathbf{B}^{s_1}_{p_1,q_1}(\Omega)\rightarrow \mathbf{B}^{s_2}_{p_2,q_2}(\Omega)\right)=\Res_\Omega\circ T\circ\id\circ S\circ \ex.
 \eeq

Hence, taking finally $\Res_\Omega$ we obtain $f$ by \eqref{final_trace}, where we started from.
In particular, due to the fact that we used the subatomic approach, the final outcome is independent of ambiguities in the nonlinear constructions $\ex$ and $S$. The unit ball in $\mathbf{B}^{s_1}_{p_1,q_1}(\Omega)$ is mapped by $S\circ \ex$ into a bounded set in 
\[
b^{s_1,\varrho_1}_{p_1,q_1}(M_j).
\] 
Since the identity operator $\id$ from \eqref{comp-id} is compact, this bounded set is mapped into a pre-compact set in 
\[
b^{s_2, \varrho_2}_{p_2,q_2}(M_j),
\]
which can be covered by $2^k$ balls of radius $ce_k(\id)$ with 
\[
e_k(\id)\leq c k^{-\frac{\delta}{n}+\frac{1}{p_2}-\frac{1}{p_1}}, \qquad k\in\nat.
\]
This follows from Theorem \ref{entropy-n-seq}, where we used $p_2\geq p_1$. Applying the two linear and bounded maps $T$ and $\Res_\Omega$ afterwards does not change this covering assertion -- using Lemma \ref{lemma-entropy}(iii) and ignoring constants for the time being.
 Hence, we arrive at a covering of the unit ball in $\mathbf{B}^{s_1}_{p_1,q_1}(\Omega)$ by $2^k$ balls of radius $ce_k(\id)$ in $\mathbf{B}^{s_2}_{p_2,q_2}(\Omega)$. Inserting 
\[
\delta=s_1-s_2-n\left(\frac{1}{p_1}-\frac{1}{p_2}\right)
\] 
in the exponent we finally obtain the desired estimate 
\[
e_k(\id)\leq c k^{-\frac{s_1-s_2}{n}}, \qquad k\in\nat.
\]

\underline{Step 2:} Let $p_1>p_2$. Since by Proposition \ref{Bcal-emb},
\[
\mathbf{B}^{s_2}_{p_1,q_2}(\Omega)\subset\mathbf{B}^{s_2}_{p_2,q_2}(\Omega), 
\]
we see that
\[
\mathbf{B}^{s_1}_{p_1,q_1}(\Omega)\subset\mathbf{B}^{s_2}_{p_1,q_2}(\Omega)\subset\mathbf{B}^{s_2}_{p_2,q_2}(\Omega),
\]
and therefore \eqref{entrop-1} is a consequence of Step 1 applied to $p_1=p_2$. This completes the proof for the upper bound.\\
\smallskip

\end{proof}

\remark{\label{rem-comp}
By \eqref{B-F-B} and the above definitions we have
\beq\label{B-F-B-2}
\mathbf{B}^s_{p,\min(p,q)}(\Omega)\hookrightarrow \Fd(\Omega)\hookrightarrow \mathbf{B}^s_{p,\max(p,q)}(\Omega).
\eeq
In other words, any assertion about entropy numbers for B-spaces where the parameter $q$ does not play any role applies also to the related F-spaces. \\

Therefore, using Lemma \ref{lemma-entropy}(iv) and Theorem \ref{th-entropy-num} we deduce compactness of the corresponding embeddings related to B- and F-spaces under investigation. 
}

\section{Homogeneity}
\label{S-2}

Our first aim is to prove the following characterization.
\begin{proposition}\label{prop:support}
Let $0<p,q\leq \infty$, $s>0$ and let $R>0$ be a real number. Then
$$
\|f|{\bf B}^s_{p,q}(\R^n)\|\sim \left(\int_0^\infty t^{-sq}\omega_r(f,t)_p^q\frac{dt}{t}\right)^{1/q}
$$
for all $f\in {\bf B}^s_{p,q}(\R^n)$ with $\supp f\subset B_R$.
\end{proposition}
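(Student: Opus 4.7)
The plan is to deduce the equivalence from a support-based lower bound on $\omega_r(f,t)_p$ for large $t$. By the remark following Definition~\ref{defi-Bd}, the Besov quasi-norm admits the equivalent form
\[
\|f|\mathbf{B}^s_{p,q}(\R^n)\| \sim \|f|L_p(\R^n)\| + \left(\int_0^\infty t^{-sq}\omega_r(f,t)_p^q\, \frac{\mathrm{d}t}{t}\right)^{1/q},
\]
so the only non-trivial content of the proposition is to absorb the $L_p$-term into the integral under the support hypothesis.

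The key observation is disjointness: if $\supp f\subset B_R$ and $|h|>2R$, then the translates $\supp f(\cdot+kh)$ for $k=0,1,\ldots,r$ are pairwise disjoint. Expanding
\[
\Delta_h^r f(x) = \sum_{k=0}^r (-1)^{r-k}\binom{r}{k} f(x+kh),
\]
the disjointness together with the $p$-additivity of $\|\cdot|L_p\|^p$ on disjointly supported pieces yields, for $0<p<\infty$,
\[
\|\Delta_h^r f|L_p(\R^n)\|^p = \sum_{k=0}^r \binom{r}{k}^p \|f|L_p(\R^n)\|^p \geq \|f|L_p(\R^n)\|^p,
\]
and a pointwise comparison yields the analogous bound for $p=\infty$. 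Taking the supremum over $|h|\leq t$ for any $t>2R$ gives $\omega_r(f,t)_p \geq \|f|L_p(\R^n)\|$.

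Inserting this into the modulus integral over $(2R,\infty)$ and using $s>0$, we obtain (for $0<q<\infty$)
\[
\int_0^\infty t^{-sq}\omega_r(f,t)_p^q\, \frac{\mathrm{d}t}{t} \geq \|f|L_p(\R^n)\|^q \int_{2R}^\infty t^{-sq-1}\,\mathrm{d}t = \frac{(2R)^{-sq}}{sq}\,\|f|L_p(\R^n)\|^q,
\]
and for $q=\infty$ the estimate at any single admissible $t>2R$ gives the same conclusion. Combined with the trivial estimate $\left(\int_0^\infty\dots\right)^{1/q}\lesssim \|f|\mathbf{B}^s_{p,q}(\R^n)\|$, which follows from the quoted Remark together with the universal upper bound $\omega_r(f,t)_p\lesssim \|f|L_p(\R^n)\|$ that controls the tail $t\geq 1$, this yields the claimed equivalence, with constants depending on $R$, $s$, $q$, and $r$. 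The only step that requires a little care is the disjoint-supports identity $\|\sum f_k|L_p\|^p=\sum\|f_k|L_p\|^p$, which holds on the full range $0<p<\infty$ but for different reasons in the Banach ($p\geq 1$) and quasi-Banach ($0<p<1$) cases; apart from that, the proof is a short direct computation and does not require the entropy number machinery from Section~\ref{S-1}.
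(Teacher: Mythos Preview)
Your argument is correct and gives a genuinely different, more elementary proof than the paper's. The paper proceeds by contradiction: assuming the $L_p$-term cannot be absorbed, it extracts a sequence $(f_j)$ with $\|f_j|L_p\|=1$ and vanishing modulus integral, invokes the compact embedding ${\bf B}^s_{p,q}(B_R)\hookrightarrow L_p(B_R)$ (this is precisely what the entropy-number machinery of Section~\ref{S-1} was set up to deliver) to pass to a limit $f$, and then argues that $f$ must be a compactly supported polynomial, hence zero. Your route bypasses all of this: the observation that the translates $f(\cdot+kh)$, $k=0,\dots,r$, are disjointly supported once $|h|>2R$ gives the explicit lower bound $\omega_r(f,t)_p\ge \|f|L_p\|$ for $t>2R$, and the $L_p$-term is absorbed by a one-line tail estimate. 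This yields explicit constants depending on $R,s,q,r$ and makes no appeal to compactness; in fact the same disjoint-support idea adapts to the $\Fd$-case (Proposition~\ref{prop:support2}) by noting that for $x\in B_R$ and $|h|>2R$ only the $k=0$ term survives in $\Delta_h^r f(x)$, so $d^r_{t,p}f(x)\gtrsim |f(x)|$ for $t$ large. The paper's approach, on the other hand, is more structural: it transfers verbatim to any setting where one has a compact embedding into $L_p$ and where the vanishing of the seminorm forces the function to be polynomial, which is why the authors develop Section~\ref{S-1} first.

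One small remark: your closing comment that the disjoint-support identity $\|\sum f_k|L_p\|^p=\sum\|f_k|L_p\|^p$ holds ``for different reasons'' in the Banach and quasi-Banach ranges is unnecessary---it follows in all cases from the pointwise identity $|\sum f_k|^p=\sum|f_k|^p$ for disjointly supported functions, with no triangle inequality involved.
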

\begin{proof}

We shall need, that ${\bf B}^s_{p,q}(B_R)$ embeds compactly into $L_p(B_R)$. This follows at once from the fact that
${\bf B}^s_{p,q}(B_R)$ is compactly embedded into ${\bf B}^{s-\varepsilon}_{p,q}(B_R)$, cf. Remark \ref{rem-comp},  
and ${\bf B}^{s-\varepsilon}_{p,q}(B_R)\hookrightarrow L_p(B_R)$, which is trivial.

We argue similarly to \cite{CLT07}. We have to prove, that
$$
\|f|L_p(\R^n)\|\lesssim \left(\int_0^\infty t^{-sq}\omega_r(f,t)_p^q\frac{dt}{t}\right)^{1/q}
$$
for every $f\in {\bf B}^s_{p,q}(\R^n)$ with $\supp f\subset B_R$. Let us assume, that this is not true.
Then we find a sequence $(f_j)_{j=1}^\infty\subset {\bf B}^s_{p,q}(\R^n)$, such that
\begin{equation}\label{eq:prop:1}
\|f_j|L_p(\R^n)\|=1\quad\text{and}\quad \left(\int_0^\infty t^{-sq}\omega_r(f_j,t)_p^q\frac{dt}{t}\right)^{1/q}\le \frac{1}{j}, 
\end{equation}
i.e., we obtain that $\|f_j|{\bf B}^s_{p,q}(\R^n)\|$ is bounded. The trivial estimates
\begin{align*}
\|f_j|L_p(\R^n)\|&=\|f_j|L_p(B_R)\|\quad \text{and}\quad \|f_j|{\bf B}^s_{p,q}(B_R)\|\le \|f_j|{\bf B}^s_{p,q}(\R^n)\|
\end{align*}
imply that this is true also for $\|f_j|{\bf B}^s_{p,q}(B_R)\|$. Due to the compactness of 
${\bf B}^s_{p,q}(B_R) \hookrightarrow L_p(B_R)$, we may assume, that $f_j\to f$ in $L_p(B_R)$
with $\|f|L_p(B_R)\|=1$. Using the subadditivity of $\omega(\cdot,t)_p$, we obtain that
$$
\left(\int_0^\infty t^{-sq}\omega_r(f_j-f_{j'},t)_p^q\frac{dt}{t}\right)^{1/q}\le \frac{1}{j}+\frac{1}{j'}.
$$
Together with the estimate $\|f_j-f_{j'}|L_p(\R^n)\|\to 0$, this implies that $(f_j)_{j=1}^\infty$
is a Cauchy sequence in ${\bf B}^s_{p,q}(\R^n)$, i.e. $f_j\to g$ in ${\bf B}^s_{p,q}(\R^n)$. Obviously, $f=g$ follows.

The subadditivity of $\omega(\cdot,t)_p$ used to the sum $(f-f_j)+f_j$ implies finally, that
$$
\left(\int_0^\infty t^{-sq}\omega_r(f,t)_p^q\frac{dt}{t}\right)^{1/q}=0.
$$
As $\omega_r(f,t)$ is a non-decreasing function of $t$, this implies that $\omega_r(f,t)=0$ for all $0<t<\infty$
and finally $\|\Delta_h^r f|L_p(\R^n)\|=0$ for all $h\in\R^n$. By standard arguments, this is satisfied only if
$f$ is a polynomial of order at most $r$. Due to its bounded support, we conclude, that $f=0$, which is a contradiction with
$\|f|L_p(\R^n)\|=1.$

\end{proof}
With the help of this proposition, the proof of homogeneity quickly follows.

\begin{theorem}\label{hom-B}
Let $0<\lambda\le 1$ and $f\in {\bf B}^s_{p,q}(\R^n)$ with $\supp f\subset B_\lambda$. Then
\beq\label{hom-B-eq}
\|f(\lambda \cdot)|{\bf B}^s_{p,q}(\R^n)\|\sim \lambda^{s-n/p}\|f|{\bf B}^s_{p,q}(\R^n)\|
\eeq
with constants of equivalence independent of $\lambda$ and $f$.
\end{theorem}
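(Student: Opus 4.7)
The plan is to derive the theorem as a direct consequence of Proposition~\ref{prop:support}, which provides an equivalent quasi-norm for functions with compact support: the $L_p$-part of the norm can be dropped in favor of the modulus of smoothness term. The first observation is that both $f$ (with $\supp f\subset B_\lambda$) and its dilate $f(\lambda\,\cdot)$ are supported in $B_1$, since $\supp f(\lambda\,\cdot) = \lambda^{-1}\supp f \subset \lambda^{-1}B_\lambda = B_1$. Crucially, $\lambda\le 1$ ensures that a \emph{single} ball $B_R$ (namely $R=1$) works for both, so the equivalence constants coming from Proposition~\ref{prop:support} applied to $f$ and to $f(\lambda\,\cdot)$ are independent of $\lambda$.

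Next I would carry out the routine scaling computation on the modulus of smoothness. From $(\Delta_h^r[f(\lambda\,\cdot)])(x) = (\Delta_{\lambda h}^r f)(\lambda x)$ and a change of variables one gets
\[
\|\Delta_h^r[f(\lambda\,\cdot)]\,|L_p(\R^n)\| = \lambda^{-n/p}\,\|\Delta_{\lambda h}^r f\,|L_p(\R^n)\|,
\]
so that $\omega_r(f(\lambda\,\cdot),t)_p = \lambda^{-n/p}\omega_r(f,\lambda t)_p$. Substituting $u=\lambda t$ in the integral, with $dt/t=du/u$, yields
\[
\left(\int_0^\infty t^{-sq}\omega_r(f(\lambda\,\cdot),t)_p^q\,\frac{dt}{t}\right)^{1/q}
= \lambda^{s-n/p}\left(\int_0^\infty u^{-sq}\omega_r(f,u)_p^q\,\frac{du}{u}\right)^{1/q},
\]
with the obvious modification for $q=\infty$ (taking a supremum instead).

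Finally I would combine this identity with Proposition~\ref{prop:support} applied separately to $f$ and $f(\lambda\,\cdot)$ (both with $R=1$): the left-hand side above is equivalent to $\|f(\lambda\,\cdot)\,|\mathbf{B}^s_{p,q}(\R^n)\|$ and the right-hand integral to $\|f\,|\mathbf{B}^s_{p,q}(\R^n)\|$, giving \eqref{hom-B-eq} with constants independent of $\lambda$ and $f$.

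The proof is essentially a one-line scaling argument once Proposition~\ref{prop:support} is in hand; the main conceptual obstacle was already resolved there, where the compact embedding $\mathbf{B}^s_{p,q}(B_R)\hookrightarrow L_p(B_R)$ was used to absorb the $L_p$-term into the difference integral. The only point requiring care here is to use the same reference radius $R=1$ for both applications of the proposition, which is possible precisely because $\lambda\le 1$; this is what prevents hidden $\lambda$-dependence from creeping into the equivalence constants.
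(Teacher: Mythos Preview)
Your proposal is correct and follows essentially the same approach as the paper: apply Proposition~\ref{prop:support} to reduce to the pure modulus-of-smoothness integral, compute the scaling $\omega_r(f(\lambda\cdot),t)_p=\lambda^{-n/p}\omega_r(f,\lambda t)_p$, and change variables. Your write-up is in fact slightly more explicit than the paper's in stressing that the same radius $R=1$ serves for both $f$ and $f(\lambda\cdot)$, which is exactly what guarantees $\lambda$-independent constants.
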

\begin{proof}
We know from Proposition \ref{prop:support} that
$$
\|f(\lambda\cdot)|{\bf B}^s_{p,q}(\R^n)\|\sim \left(\int_0^\infty t^{-sq}\omega_r(f(\lambda\cdot),t)_p^q\frac{dt}{t}\right)^{1/q},
$$
as $\supp f(\lambda\cdot)\subset B_1$. Using $\Delta_h^r(f(\lambda\cdot))(x)=(\Delta ^r_{\lambda h}f)(\lambda x)$, we get
\begin{align*}
\omega_r(f(\lambda\cdot),t)_p&=\sup_{|h|\le t}\|\Delta_h^r(f(\lambda\cdot))\|_p
=\sup_{|h|\le t}\|(\Delta ^r_{\lambda h}f)(\lambda \cdot)\|_p
=\lambda^{-n/p}\sup_{|h|\le t}\|(\Delta ^r_{\lambda h}f)(\cdot)\|_p\\
&=\lambda^{-n/p}\sup_{|\lambda h|\le \lambda t}\|(\Delta ^r_{\lambda h}f)(\cdot)\|_p
=\lambda^{-n/p}\omega_r(f,\lambda t)_p,
\end{align*}
which finally implies
\begin{align*}
\left(\int_0^\infty t^{-sq}\omega_r(f(\lambda\cdot),t)_p^q\frac{dt}{t}\right)^{1/q}
&=\lambda^{-n/p}\left(\int_0^\infty t^{-sq}\omega_r(f,\lambda t)_p^q\frac{dt}{t}\right)^{1/q}\\
&=\lambda^{s-n/p}\left(\int_0^\infty t^{-sq}\omega_r(f,t)_p^q\frac{dt}{t}\right)^{1/q}
\sim \lambda^{s-n/p}\|f|{\bf B}^s_{p,q}(\R^n)\|.
\end{align*}
\end{proof}

The homogeneity property for Triebel-Lizorkin spaces $\Fd(\rn)$ follows similarly.

\begin{proposition}\label{prop:support2}
Let $0<p<\infty, 0<q\le\infty$, $s>0$ and let $R>0$ be a real number. Then
$$
\|f|{\bf F}^s_{p,q}(\R^n)\|\sim \left\|\left(\int_0^\infty t^{-sq}d^r_{t,p}f(\cdot)^q\frac{dt}{t}\right)^{1/q}|L_p(\R^n)\right\|
$$
for all $f\in {\bf F}^s_{p,q}(\R^n)$ with $\supp f\subset B_R$.
\end{proposition}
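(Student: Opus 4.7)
The plan is to follow the proof of Proposition \ref{prop:support} essentially verbatim, replacing $\omega_r(f,t)_p$ by $d^r_{t,p}f(x)$ and the B-functional by its F-counterpart. Writing
\[
I(f):=\left\|\left(\int_0^\infty t^{-sq}d^r_{t,p}f(\cdot)^q\frac{\ud t}{t}\right)^{1/q}\Big|L_p(\R^n)\right\|,
\]
the F-quasi-norm splits as $\|f|\mathbf{F}^s_{p,q}(\R^n)\|=\|f|L_p(\R^n)\|+I(f)$, so the task reduces to showing
$\|f|L_p(\R^n)\|\lesssim I(f)$
for all $f\in\mathbf{F}^s_{p,q}(\R^n)$ with $\supp f\subset B_R$.

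First I would establish the compact embedding $\mathbf{F}^s_{p,q}(B_R)\hookrightarrow L_p(B_R)$. By \eqref{B-F-B-2} one has $\mathbf{F}^s_{p,q}(B_R)\hookrightarrow\mathbf{B}^s_{p,\max(p,q)}(B_R)$; combined with the compact embedding $\mathbf{B}^s_{p,\max(p,q)}(B_R)\hookrightarrow\mathbf{B}^{s-\varepsilon}_{p,\max(p,q)}(B_R)$ provided by Theorem \ref{th-entropy-num} and Lemma \ref{lemma-entropy}(iv) (for sufficiently small $\varepsilon>0$), together with the trivial embedding $\mathbf{B}^{s-\varepsilon}_{p,\max(p,q)}(B_R)\hookrightarrow L_p(B_R)$, this gives what is needed. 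Then I argue by contradiction: if the desired inequality fails, there exists a sequence $(f_j)\subset\mathbf{F}^s_{p,q}(\R^n)$ with $\supp f_j\subset B_R$, $\|f_j|L_p(\R^n)\|=1$ and $I(f_j)\le 1/j$. Hence $\|f_j|\mathbf{F}^s_{p,q}(\R^n)\|$ and $\|f_j|\mathbf{F}^s_{p,q}(B_R)\|$ stay bounded, and compactness allows us to pass to a subsequence with $f_j\to f$ in $L_p(B_R)$ and $\|f|L_p(B_R)\|=1$; extension by zero produces $f$ on $\R^n$ with $\supp f\subset B_R$.

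The key step, and the main technical obstacle, is to show that $(f_j)$ is Cauchy in $\mathbf{F}^s_{p,q}(\R^n)$, i.e.\ that $I$ satisfies a quasi-triangle inequality. Using the pointwise estimate $|\Delta^r_h(f_j-f_{j'})(x)|\le|\Delta^r_h f_j(x)|+|\Delta^r_h f_{j'}(x)|$ and the $\min(1,p)$- and $\min(1,q)$-triangle inequalities applied successively to the inner $L_p(B_t,\ud h)$, the $L_q((0,\infty);\ud t/t)$ and the outer $L_p(\R^n)$ quasi-norms entering $I$, one obtains
\[
I(f_j-f_{j'})\lesssim I(f_j)+I(f_{j'})\le \frac{1}{j}+\frac{1}{j'}.
\]
Combined with $\|f_j-f_{j'}|L_p(\R^n)\|\to 0$, this shows that $(f_j)$ is Cauchy in $\mathbf{F}^s_{p,q}(\R^n)$; let $g$ be its limit. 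F-convergence implies $L_p$-convergence, so $f=g\in\mathbf{F}^s_{p,q}(\R^n)$.

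Finally, the same quasi-subadditivity gives $I(f)\lesssim I(f_j)+I(f-f_j)\to 0$, hence $I(f)=0$. Consequently $d^r_{t,p}f(x)=0$ for a.e.\ $(t,x)$, which by Fubini forces $\Delta^r_h f(x)=0$ for a.e.\ $(h,x)\in\R^n\times\R^n$. By the standard characterization of polynomials via vanishing iterated differences, $f$ coincides a.e.\ with a polynomial of degree less than $r$; since $\supp f\subset B_R$ is bounded, $f\equiv 0$, contradicting $\|f|L_p(\R^n)\|=1$.
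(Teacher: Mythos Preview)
Your proposal is correct and follows essentially the same route as the paper's proof: both argue by contradiction, use compactness of $\mathbf{F}^s_{p,q}(B_R)\hookrightarrow L_p(B_R)$ (which the paper derives from Remark~\ref{rem-comp}, exactly as you do via \eqref{B-F-B-2} and Theorem~\ref{th-entropy-num}), show $(f_j)$ is Cauchy in $\mathbf{F}^s_{p,q}(\R^n)$, and conclude that the limit is a compactly supported polynomial and hence zero. The paper simply writes ``a straightforward calculation shows'' for the Cauchy step, whereas you spell out the nested quasi-triangle inequalities for $I$; otherwise the arguments are the same.
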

\begin{proof}
We have to prove that
$$
\|f|L_p(\R^n)\|\lesssim \left\|\left(\int_0^\infty t^{-sq}d^r_{t,p}f(\cdot)^q\frac{dt}{t}\right)^{1/q}|L_p(\R^n)\right\|
$$
for every $f\in {\bf F}^s_{p,q}(\R^n)$ with $\supp f\subset B_R$. Let us assume again, that this is not true. Then we find a sequence
$(f_j)_{j=1}^\infty\subset {\bf F}^s_{p,q}(\R^n)$ such that
$$
\|f_j|L_p(\R^n)\|=1\quad\text{and}\quad \left\|\left(\int_0^\infty t^{-sq}d^r_{t,p}f_j(\cdot)^q\frac{dt}{t}\right)^{1/q}|L_p(\R^n)\right\|\le\frac 1j,
$$
which in turn implies, that $\|f_j|{\bf F}^s_{p,q}(\R^n)\|$ is bounded. Again, the same is true also for $\|f_j|{\bf F}^s_{p,q}(B_R)\|$.
Due to the compactness of ${\bf F}^s_{p,q}(\R^n)\hookrightarrow L_p(\R^n)$ we may assume, that $f_j\to f$ in $L_p(B_R)$ with $\|f|L_p(B_R)\|=1.$
A straightforward calculation shows again that $(f_j)_{j=1}^\infty$ is a Cauchy sequence in ${\bf F}^s_{p,q}(\R^n)$ and, therefore, $f_j\to f$
also in ${\bf F}^s_{p,q}(\R^n)$. Finally, we obtain
$$
\left\|\left(\int_0^\infty t^{-sq}d^r_{t,p}f(\cdot)^q\frac{dt}{t}\right)^{1/q}|L_p(\R^n)\right\|=0
$$
or, equivalently, 
$$
\int_0^\infty t^{-sq}d^r_{t,p}f(x)^q\frac{dt}{t}=0
$$
for almost every $x\in\R^n$. Hence, $d^r_{t,p}f(x)=0$ for almost all $x\in\R^n$ and almost all $t>0$. By standard arguments
it follows that $f$ must be almost everywhere equal to a polynomial of order smaller then $r$. Together with the bounded support of
$f$, we obtain that $f$ must be equal to zero almost everywhere.
\end{proof}

\begin{theorem}\label{hom-F}
Let $0<\lambda\le 1$ and $f\in {\bf F}^s_{p,q}(\R^n)$ with $\supp f\subset B_\lambda$. Then
\beq\label{hom-F-eq}
\|f(\lambda \cdot)|{\bf F}^s_{p,q}(\R^n)\|\sim \lambda^{s-n/p}\|f|{\bf F}^s_{p,q}(\R^n)\|
\eeq
with constants of equivalence independent of $\lambda$ and $f$.
\end{theorem}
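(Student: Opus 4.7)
The plan is to mirror the proof of Theorem \ref{hom-B}, substituting the ball means characterization of Proposition \ref{prop:support2} for the modulus of smoothness characterization of Proposition \ref{prop:support}. Since $\supp f \subset B_\lambda$ with $\lambda \le 1$, we have $\supp f(\lambda\cdot) \subset B_1$, so Proposition \ref{prop:support2} applies to both $f$ and $f(\lambda\cdot)$, and the $L_p$-term in the quasi-norm \eqref{F-diff} can be dropped on both sides.

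The key computation is therefore the behaviour of $d^r_{t,p}$ under dilation. Using the identity $(\Delta_h^r(f(\lambda\cdot)))(x) = (\Delta^r_{\lambda h}f)(\lambda x)$ together with the substitution $h' = \lambda h$ inside the defining integral \eqref{ball_means}, I would obtain
\[
d^r_{t,p}(f(\lambda\cdot))(x) = \left(t^{-n}\int_{|h|\le t}|(\Delta^r_{\lambda h}f)(\lambda x)|^p\,dh\right)^{1/p} = d^r_{\lambda t,p}f(\lambda x).
\]
This is the exact analogue of the identity $\omega_r(f(\lambda\cdot),t)_p = \lambda^{-n/p}\omega_r(f,\lambda t)_p$ used in the proof of Theorem \ref{hom-B}, except that here the factor $\lambda^{-n/p}$ will be absorbed later by the outer $L_p$-norm rather than appearing directly.

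The rest is a double change of variables. Substituting $u=\lambda t$ in the $t$-integral eats a factor $\lambda^s$ (via $t^{-sq}$ and the invariance of $\frac{dt}{t}$), so
\[
\left(\int_0^\infty t^{-sq}d^r_{t,p}(f(\lambda\cdot))(x)^q\frac{dt}{t}\right)^{1/q} = \lambda^s\, G(\lambda x),\qquad G(y):=\left(\int_0^\infty u^{-sq}d^r_{u,p}f(y)^q\frac{du}{u}\right)^{1/q}.
\]
Then taking the $L_p(\R^n)$-norm in $x$ and substituting $y=\lambda x$ produces the missing factor $\lambda^{-n/p}$, giving
\[
\left\|\left(\int_0^\infty t^{-sq}d^r_{t,p}(f(\lambda\cdot))(\cdot)^q\frac{dt}{t}\right)^{1/q}\bigl|L_p(\R^n)\right\| = \lambda^{s-n/p}\,\|G|L_p(\R^n)\|.
\]
Applying Proposition \ref{prop:support2} once to the left-hand side (support in $B_1$) and once to identify $\|G|L_p(\R^n)\|$ with $\|f|{\bf F}^s_{p,q}(\R^n)\|$ (support in $B_\lambda\subset B_R$ for any fixed $R\ge 1$) concludes the proof.

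There is no real obstacle here; the only point that deserves attention is that Proposition \ref{prop:support2} is applied on both sides of the scaling, which is why the $L_p(\R^n)$-summand of \eqref{F-diff} may be discarded without loss. All constants of equivalence are independent of $\lambda$ because the constants in Proposition \ref{prop:support2} depend only on $R$, $s$, $p$, $q$ and $n$, and we apply that proposition with the fixed radius $R=1$ (respectively any fixed $R\ge 1$) throughout.
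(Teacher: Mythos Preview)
Your proposal is correct and follows essentially the same route as the paper's own proof: apply Proposition~\ref{prop:support2} to drop the $L_p$-term, use the identity $d^r_{t,p}(f(\lambda\cdot))(x)=d^r_{\lambda t,p}f(\lambda x)$, and then extract the factor $\lambda^{s-n/p}$ by the two substitutions $u=\lambda t$ and $y=\lambda x$. The only cosmetic difference is that you make the second application of Proposition~\ref{prop:support2} (to $f$ itself) more explicit than the paper does.
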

\begin{proof}
We know from Proposition \ref{prop:support2} that
$$
\|f(\lambda\cdot)|{\bf F}^s_{p,q}(\R^n)\|\sim \left\|\left(\int_0^\infty t^{-sq}d^r_{t,p}(f(\lambda\cdot))(\cdot)^q\frac{dt}{t}\right)^{1/q}|L_p(\R^n)\right\|,
$$
as $\supp f(\lambda\cdot)\subset B_1$. Using $\Delta_h^r(f(\lambda\cdot))(x)=(\Delta ^r_{\lambda h}f)(\lambda x)$, we get using the substitution $\tilde h=\lambda h$
\begin{align*}
d^r_{t,p}(f(\lambda\cdot))(x)&=\left(t^{-n}\int_{|h|\le t}|\Delta^r_{h}f(\lambda\cdot)(x)|^pdh\right)^{1/p}
=\left(t^{-n}\int_{|h|\le t}|(\Delta^r_{\lambda h}f)(\lambda x)|^pdh\right)^{1/p}\\
&=\left((\lambda t)^{-n}\int_{|\tilde h|\le \lambda t}|(\Delta^r_{\tilde h}f)(\lambda x)|^pd\tilde h\right)^{1/p}=d^r_{\lambda t,p}(f)(\lambda x),
\end{align*}
which finally implies
\begin{align*}
&\left\|\left(\int_0^\infty t^{-sq}d^r_{t,p}(f(\lambda\cdot))(\cdot)^q\frac{dt}{t}\right)^{1/q}|L_p(\R^n)\right\|
=\left\|\left(\int_0^\infty t^{-sq}d^r_{\lambda t,p}f(\lambda \cdot)^q\frac{dt}{t}\right)^{1/q}|L_p(\R^n)\right\|\\
&\qquad\qquad=\lambda^s\left\|\left(\int_0^\infty t^{-sq}d^r_{t,p}f(\lambda \cdot)^q\frac{dt}{t}\right)^{1/q}|L_p(\R^n)\right\|
=\lambda^{s-n/p}\left\|\left(\int_0^\infty t^{-sq}d^r_{t,p}f(\cdot)^q\frac{dt}{t}\right)^{1/q}|L_p(\R^n)\right\|\\
&\qquad\qquad \sim \lambda^{s-n/p}\|f|{\bf F}^s_{p,q}(\R^n)\|.
\end{align*}
\end{proof}

\section{Pointwise multipliers}
\label{S-PoMu}\label{S-3}

We briefly sketch an application of the above homogeneity results in terms of pointwise multipliers.
A locally integrable function $\varphi$ in $\rn$ is called a \textit{pointwise multiplier} in $\Ad(\rn)$ if
\[
f\mapsto \varphi f
\]
maps the considered space into itself. For further details on the subject we refer to \cite[pp.~201-206]{T-F2} and \cite[Ch.~4]{RuSi96}. Our aim is to generalize Proposition \ref{prop-mult-diffeo}  as a direct consequence of Theorems \ref{hom-B}, \ref{hom-F}. Again let $B_{\lambda}$ be the balls introduced in \eqref{op_ba}. 

\begin{corollary}
Let $s>0$, $0<p,q\leq\infty$ and $0<\lambda\leq 1$. Let $\varphi$ be a function having classical derivatives in $B_{2\lambda}$ up to order $1+[s]$ with
\[
|\uD^{\alpha}\varphi(x)|\leq a\lambda^{-|\gamma|}, \qquad |\gamma|\leq 1+[s], \qquad x\in B_{2\lambda},
\]
for some constant $a>0$. Then $\varphi$ is a pointwise multiplier in $\Bd(B_{\lambda})$, 
\beq\label{pointw-mult}
\|\varphi f|\Bd(B_{\lambda})\|\leq c\|f|\Bd(B_{\lambda})\|, 
\eeq
where $c$ is independent of $f\in \Bd(B_{\lambda})$ and of $\lambda$ (but depends on $a$). 
\end{corollary}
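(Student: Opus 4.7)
The plan is to use the homogeneity property Theorem~\ref{hom-B} to rescale $B_\lambda$ to the unit ball, where the hypothesis on $\varphi$ becomes a uniform $C^{1+[s]}$-bound and Proposition~\ref{prop-mult-diffeo} applies with a constant independent of $\lambda$. The rescaling is engineered so that the factors $\lambda^{s-n/p}$ produced by homogeneity appear identically on both sides of the estimate and cancel.

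The crucial observation is that the rescaled multiplier $\tilde\varphi(y):=\varphi(\lambda y)$, $y\in B_2$, satisfies, by the chain rule,
\[
|\uD^\gamma\tilde\varphi(y)|=\lambda^{|\gamma|}\,|(\uD^\gamma\varphi)(\lambda y)|\le a,\qquad |\gamma|\le 1+[s],
\]
independently of $\lambda$. After multiplying $\tilde\varphi$ by a fixed smooth cutoff equal to $1$ on $B_1$ and supported in $B_2$, one obtains a function $\Phi\in C^{1+[s]}(\rn)$ with $\|\Phi|C^{1+[s]}(\rn)\|$ bounded by a constant multiple of $a$, uniformly in $\lambda$. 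Proposition~\ref{prop-mult-diffeo} then gives
\[
\|\Phi h|\Bd(\rn)\|\lesssim \|h|\Bd(\rn)\|\qquad \text{for all } h\in\Bd(\rn),
\]
with implicit constant depending only on $a,s,p,q,n$.

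Given $f\in\Bd(B_\lambda)$, I would then select an extension $g\in\Bd(\rn)$ of $f$ with $\supp g\subset B_\lambda$ and $\|g|\Bd(\rn)\|\lesssim\|f|\Bd(B_\lambda)\|$ uniformly in $\lambda$, and set $\tilde g(y):=g(\lambda y)$. Then $\supp\tilde g\subset B_1$, so $\Phi\tilde g=\tilde\varphi\tilde g=(\varphi g)(\lambda\cdot)$ everywhere. Applying the multiplier estimate with $h=\tilde g$ and invoking Theorem~\ref{hom-B} twice (once for $g$, once for $\varphi g$, both supported in $B_\lambda$) the scaling factors $\lambda^{s-n/p}$ cancel, yielding
\[
\|\varphi g|\Bd(\rn)\|\;\lesssim\;\|g|\Bd(\rn)\|\;\lesssim\;\|f|\Bd(B_\lambda)\|.
\]
Since $(\varphi g)\big|_{B_\lambda}=\varphi f$, the definition of the restriction norm immediately yields \eqref{pointw-mult}.

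The main technical obstacle is the construction of the extension $g$ with bounded support in $B_\lambda$ and scale-invariant norm control. A naive multiplication of a generic extension by a rescaled cutoff $\eta(\cdot/\lambda)$ would call on multiplier bounds for $\eta(\cdot/\lambda)$ that are themselves uniform in $\lambda$ --- precisely the type of statement being proved. This can be dealt with either by bootstrapping the same scaling scheme to the cutoff (whose derivatives satisfy the corollary's hypothesis with $a$ replaced by $\|\eta|C^{1+[s]}(\rn)\|$) or, more cleanly, by appealing to a scale-invariant compactly supported extension operator on bounded balls, obtained by pulling back through $x\mapsto x/\lambda$ a fixed such operator on $B_1$ and verifying the uniform norm bound with one further appeal to Theorem~\ref{hom-B}. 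The case $\lambda\in(1/2,1]$ is handled separately and directly, since the hypothesis then supplies $|\uD^\gamma\varphi|\le 2^{1+[s]}a$ on $B_{2\lambda}\supset B_1$, reducing the claim to Proposition~\ref{prop-mult-diffeo} after a routine smooth extension of $\varphi$ to $\rn$.
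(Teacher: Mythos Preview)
Your approach is correct and is essentially the same as the paper's: rescale so that $\tilde\varphi=\varphi(\lambda\cdot)$ has $C^{1+[s]}$-bounds independent of $\lambda$, invoke Proposition~\ref{prop-mult-diffeo} at scale~$1$, and let the homogeneity factors $\lambda^{s-n/p}$ cancel. The paper carries this out in one line by applying \eqref{hom-B-eq} directly to the restriction norms,
\[
\|\varphi f|\Bd(B_{\lambda})\|\sim \lambda^{-(s-n/p)}\|(\varphi f)(\lambda\cdot)|\Bd(B_{1})\|
\lesssim \lambda^{-(s-n/p)}\|f(\lambda\cdot)|\Bd(B_{1})\|\sim \|f|\Bd(B_{\lambda})\|,
\]
thereby hiding the compactly-supported-extension issue you flag inside the equivalence $\|g|\Bd(B_\lambda)\|\sim\lambda^{-(s-n/p)}\|g(\lambda\cdot)|\Bd(B_1)\|$; your more explicit route through an extension $g$ with $\supp g\subset B_\lambda$ is simply an unpacking of that same step.
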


\begin{proof}
By Proposition \ref{prop-mult-diffeo} the function $\varphi(\lambda\cdot)$ is a pointwise multiplier in $\Bd(B_1)$. Then \eqref{pointw-mult} is a consequence of \eqref{hom-B-eq},
\begin{eqnarray*}
\|\varphi f|\Bd(B_{\lambda})\|
\sim \lambda^{-(s-\frac np)}\|\varphi f(\lambda\cdot)|\Bd(B_{1})\|
\lesssim \lambda^{-(s-\frac np)}\| f(\lambda\cdot)|\Bd(B_{1})\|
\sim \|f|\Bd(B_{\lambda})\|.
\end{eqnarray*} 
\end{proof} 
 
\remark{
In terms of Triebel-Lizorkin spaces $\Fd(\rn)$  we obtain corresponding results (assuming $p<\infty$) with the additional restriction on the smoothness parameter $s$ that 
\beq\label{restr-s}
s>n\left(\frac{1}{\min(p,q)}-\frac 1p\right).
\eeq
This follows from the fact that the analogue of Proposition \ref{prop-mult-diffeo} for F-spaces is established using an atomic 
characterization of the spaces $\Fd(\rn)$ which is only true if we impose \eqref{restr-s}, cf. \cite[Prop.~9.14]{T-F3}. 

}
 
%
%


\def\cprime{$'$}

\vfill

{\small
\begin{minipage}[t]{0.45\textwidth}
\noindent
Cornelia Schneider\\
Applied Mathematics III\\
University Erlangen--Nuremberg\\
Cauerstra\ss{}e 11\\
91058 Erlangen\\
Germany\\[1ex]
{\tt schneider@am.uni-erlangen.de}
\end{minipage}\hfill
\begin{minipage}[t]{0.45\textwidth}
\noindent
Jan Vyb\'iral\\
Austrian Academy of Sciences (RICAM)\\
Altenbergerstra\ss{}e 69\\
A-4040 Linz\\
Austria\\[1ex]
{\tt jan.vybiral@oeaw.ac.at}
\end{minipage}\\
}

\end{document}